\newtcolorbox{activitybox}[1][]{%
	breakable,
	enhanced,
	colback=lightergray,
	boxrule=3pt,
	arc=5pt,
	outer arc=5pt,
	boxsep=10pt,
	colframe=darkergray,
	coltitle=white,
	#1
}
\newcommand{\quot}[2]{%
	\raise1ex\hbox{$#1$}\Big/\lower1ex\hbox{$#2$}%
}
\newcommand{\colim}{\varinjlim}
\renewcommand{\lim}{\varprojlim}
\def\Rvarlim@#1#2{%
	\vtop{\m@th\ialign{##\cr
			\hfil$#1\operator@font Rlim$\hfil\cr
			\noalign{\nointerlineskip\kern1.5\ex@}#2\cr
			\noalign{\nointerlineskip\kern-\ex@}\cr}}%
}
\def\Rlim{%
	\mathop{\mathpalette\Rvarlim@{\leftarrowfill@\textstyle}}\nmlimits@
}
\DeclareMathOperator{\GR}{GR}
\newcommand{\expl}[2]{\underset{\mathclap{\minibox[c]{$\uparrow$\\ \fbox{\footnotesize #2}}}}{#1}}
\newcommand{\longexpl}[2]{\underset{\mathclap{\minibox[c]{$\big\uparrow$\\[3 pt] \fbox{\footnotesize #2}}}}{#1}}
\newcommand{\dra}{\dashrightarrow}
\def\xto#1#2{\xrightarrow{#1}{#2}}
\def\xto#1{\xrightarrow{#1}}
\newcommand{\surj}{\twoheadrightarrow}
\newcommand{\ttilde}{\widetilde}
\newcommand{\HHom}{\mathcal{H}om}
\newcommand{\stacksproj}[1]{{\cite[Tag~\href{http://stacks.math.columbia.edu/tag/#1}{#1}]{Stacks_Project}}}
\newcommand{\inc}{\subseteq}
\newcommand{\esp}{\mbox{ }}
\newcommand{\bighat}{\widehat}
\DeclareMathAlphabet{\mathchanc}{OT1}{pzc}%
{m}{it}
\newcommand{\bigset}[2]{\big\{ \ #1 \  \big| \ #2 \ \big\}}
\newcommand{\bF}{\mathbb{F}}
\newcommand{\bQ}{\mathbb{Q}}
\newcommand{\bZ}{\mathbb{Z}}
\newcommand{\scr}{\mathcal}
\newcommand{\cF}{\scr{F}}
\newcommand{\cH}{\scr{H}}
\newcommand{\cL}{\scr{L}}
\newcommand{\cM}{\scr{M}}
\newcommand{\cN}{\scr{N}}
\newcommand{\cO}{\scr{O}}
\newcommand{\cP}{\scr{P}}
\newcommand{\cR}{\scr{R}}
\DeclareMathOperator{\injj}{{inj}}
\DeclareMathOperator{\FM}{{FM}}
\DeclareMathOperator{\alb}{{alb}}
\DeclareMathOperator{\Alb}{{Alb}}
\DeclareMathOperator{\coker}{{coker}}
\DeclareMathOperator{\Frac}{Frac}
\DeclareMathOperator{\Tor}{Tor}
\DeclareMathOperator{\id}{{id}}
\DeclareMathOperator{\im}{{im}}
\DeclareMathOperator{\length}{{length}}
\DeclareMathOperator{\Pic}{Pic}
\DeclareMathOperator{\GL}{{GL}}
\DeclareMathOperator{\Spec}{{Spec}}
\DeclareMathOperator{\Supp}{{Supp}}
\DeclareMathOperator{\Crys}{Crys}
\DeclareMathOperator{\coh}{coh}
\DeclareMathOperator{\RGamma}{R\Gamma}
\DeclareMathOperator{\perf}{perf}
\DeclareMathOperator{\Coh}{Coh}
\DeclareMathOperator{\ShHom}{\mathscr{H}\text{\kern -3pt {\calligra\large om}}\,}
\DeclareFontFamily{OT1}{pzc}{}
\DeclareFontShape{OT1}{pzc}{m}{it}{<-> s * [1.200] pzcmi7t}{}
\DeclareMathAlphabet{\mathpzc}{OT1}{pzc}{m}{it}
\renewcommand{\phi}{\varphi}
\newcommand{\factor}[2]{\left. \raise 2pt\hbox{\ensuremath{#1}} \right/
	\hskip -2pt\raise -2pt\hbox{\ensuremath{#2}}}
\renewcommand\subsection{
	\renewcommand{\sfdefault}{pag}
	\@startsection{subsection}%
	{2}{0pt}{.8\baselineskip}{.4\baselineskip}{\raggedright
		\sffamily\itshape\small\bfseries
}}
\renewcommand\section{
	\renewcommand{\sfdefault}{phv}
	\@startsection{section} %
	{1}{0pt}{\baselineskip}{.8\baselineskip}{\centering
		\sffamily
		\scshape
		\bfseries
}}
\definecolor{gr}{rgb}{0,0.5,0}
\newcommand{\fm}{\mathfrak{m}}
\newcommand{\Addresses}{{
		\bigskip
		\footnotesize
		
		\textsc{\'Ecole Polytechnique F\'ed\'erale de Lausanne, SB MATH CAG, MA C3 615 (B\^atiment MA), Station 8, CH-1015 Lausanne, Switzerland}\par\nopagebreak
		\textit{E-mail address}: \texttt{jefferson.baudin@epfl.ch}

}}
\author{Jefferson Baudin}
\date{}
\setlist{  
	listparindent=\parindent,
	parsep=0pt,
}
\subjclass[2020]{14K05, 14G17, 14F17}
\keywords{Generic vanishing, positive characteristic, Cartier crystals, Grauert--Riemenschneider vanishing}
\title[Euler characteristic of weakly ordinary irregular varieties]{On the Euler characteristic of weakly ordinary varieties of maximal Albanese dimension}
\begin{document}
	\maketitle
	\begin{abstract}
		We show that a smooth proper weakly ordinary variety $X$ of maximal Albanese dimension satisfies $\chi(X, \omega_X) \geq 0$. We also show that if $X$ is not of general type, then $\chi(X, \omega_X) = 0$ and the Albanese image of $X$ is fibered by abelian varieties. The proof uses the positive characteristic generic vanishing theory developed by Hacon-Patakfalvi, as well as our recent Witt vector version of Grauert--Riemenschneider vanishing.
	\end{abstract}	
	
	\tableofcontents
\section{Introduction}

\subsection{Main results}

A classical theorem of Green and Lazarsfeld (\cite{Green_Lazarsfeld_Generic_vanishing}) is that is a smooth projective complex variety $X$ has maximal Albanese dimension (i.e. $\dim(X) = \dim(\alb_X(X))$), then $\chi(X, \omega_X) \geq 0$.

The proof of this result is based on generic vanishing theory. Since the emergence of a positive characteristic version of generic vanishing \cite{Hacon_Pat_GV_Characterization_Ordinary_AV, Hacon_Pat_GV_Geom_Theta_Divs, Baudin_Positive_characteristic_generic_vanishing_theory}, one might hope to obtain an analogue of this statement in positive characteristic. We prove the following:

\begin{thm_letter}[{\autoref{main_thm_ordinary}}]\label{intro_main_thm}
	Let $X$ be a smooth proper variety of maximal Albanese dimension, and assume that $X$ is weakly ordinary. Then we have \[ \chi(X, \omega_X) \geq 0. \]
\end{thm_letter}

A variety $X$ in positive characteristic is called \emph{weakly ordinary} if for all $i \geq 0$, the map $H^i(X, \cO_X) \to H^i(X, F_*\cO_X)$ induced by taking $p$-powers is an isomorphism. For example, an abelian variety is weakly ordinary if and only if it is ordinary, and hence this property holds for a general abelian variety. \\

It turns out that we actually prove a more general statement, from which \autoref{intro_main_thm} can be deduced:

\begin{thm_letter}[{\autoref{main_thm_ss}}]\label{intro_main_thm_ss}
	Let $X$ be a smooth proper variety admitting a generically finite map to an ordinary abelian variety. Then we have \[ \chi_{ss}(X, \omega_X) \geq 0. \]
\end{thm_letter}

The symbol $\chi_{ss}$ denotes the semistable Euler characteristic, which ignores nilpotence of the Frobenius action on the cohomology groups. In particular, if $X$ is weakly ordinary, then $\chi_{ss}(X, \omega_X) = \chi(X, \omega_X)$. Note that if $A$ is a supersingular abelian variety, then $\chi_{ss}(A, \omega_A) = (-1)^{\dim(A)}$ (which can be negative), so one must impose some ordinarity condition for \autoref{intro_main_thm_ss} to hold. \\

Over the complex numbers, we have in addition the following two implications for smooth varieties of maximal Albanese dimension:

\begin{itemize}
	\item if $X$ is not of general type, then $\chi(X, \omega_X) = 0$;
	\item if $\chi(X, \omega_X) = 0$, then $\alb_X(X)$ is fibered by abelian varieties (i.e. it admits a fibration whose fibers are abelian varieties).
\end{itemize}

We prove the exact analogues in this positive characteristic setting:

\begin{thm_letter}[\autoref{main_thm_ordinary}]\label{intro_main_thm_not_gen_type}
	Let $X$ be a smooth proper weakly ordinary variety of maximal Albanese dimension. Assume further that $X$ is not of general type. Then \[ \chi(X, \omega_X) = 0. \]
\end{thm_letter}

\begin{thm_letter}[{\autoref{euler_char_zero_implies_alb_fibered_by_ab_vars}}]\label{intro_fibered_by_tori}
	Let $X$ be a smooth proper weakly ordinary variety of maximal Albanese dimension, and assume that $\chi(X, \omega_X) = 0$ (e.g. $X$ is not of general type). Then $\alb_X(X)$ is fibered by ordinary abelian varietes.
\end{thm_letter}


%
%
%

As an interesting consequence of (the proof of) \autoref{intro_main_thm_not_gen_type}, we obtain that if $X$ is a smooth proper threefold not of general type that admits a generically finite morphism $a \colon X \to A$ to an ordinary abelian variety, then Grauert--Riemenschneider holds up to nilpotence for $a$, namely all sheaves $R^ia_*\omega_X$ are nilpotent under the Cartier operator (see \autoref{suprising_GR_vanishing}). This fails if we drop the non--general type assumption, or if we work in higher dimension (see \autoref{example_non_GR_van}).

\begin{questions_intro}
	\begin{enumerate}
		\item In the setup of the above paragraph, assume furthermore that $X$ is weakly ordinary. Do we have that $R^ia_*\omega_X = 0$ for all $i > 0$ and that $a_*\omega_X$ is a GV--sheaf?
		\item In general, what happens when we drop any ordinarity condition? For example, does there exist a smooth projective threefold $X$ of maximal Albanese dimension in positive characteristic for which $\chi(X, \omega_X) < 0$? Can this happen when $\Alb(X)$ is ordinary?
	\end{enumerate}
\end{questions_intro}

\subsection{Strategy of proofs}

\setcounter{theorem}{4}

Let us first sketch the proof of Ein and Lazarsfeld's theorem over the complex numbers. To lighten notations, let $a \colon X \to \Alb(X)$ denote the Albanese morphism of $X$. Pick a general numerically trivial line bundle $L$ on $\Alb(X)$, so that by the generic vanishing theorem (\cite[Corollary 4.2]{Hacon_A_derived_category_approach_to_generic_vanishing}), we have \[ H^i(\Alb(X), a_*\omega_X \otimes L) = 0 \] for all $i > 0$. By Grauert--Riemenschneider vanishing (i.e. $R^ja_*\omega_X = 0$ for all $j > 0$), we have \[ H^i(X, \omega_X \otimes a^*L) = H^i(\Alb(X), a_*\omega_X \otimes L) = 0 \] for all $i > 0$, so \[ \chi (X, \omega_X \otimes L) = h^0(X, \omega_X \otimes L) \geq 0. \] Since $\chi(X, \omega_X) = \chi(X, \omega_X \otimes L)$, the proof is complete. \\

We therefore see that there are two main parts in this proof, namely generic vanishing and Grauert--Riemenschneider vanishing. Let us explain their positive characteristic analogues.

\subsubsection{Positive characteristic generic vanishing theory}

Although generic vanishing is known to fail in positive characteristic \cite{Filipazzi_GV_fails_in_pos_char, Hacon_Kovacs_GV_fails_in_pos_char}, a breakthrough of Hacon and Patakfalvi was to find a meaningful analogue that actually holds \cite{Hacon_Pat_GV_Characterization_Ordinary_AV, Hacon_Pat_GV_Geom_Theta_Divs}. Instead of just at certain sheaves (e.g. $a_*\omega_X$), we should remember that they carry certain Frobenius actions (e.g. the Cartier operator), and work up to nilpotence under these actions. The right objects to work with in our contexts are \emph{Cartier modules}, i.e. pairs $(\cM, \theta)$ consisting of a coherent sheaf $\cM$ and a morphism $\theta \colon F_*\cM \to \cM$.

In our case, instead of looking at the usual Euler characteristic of $a_*\omega_X$ (or of general Cartier modules), we should consider their \emph{semistable version} (i.e. the version up to nilpotence under the Cartier operator, see \autoref{rem:def_Cartier_mod}). We prove the following general result:

\begin{prop_letter}[{\autoref{Euler_char_Cartier_modules_ordinary}}]\label{intro_prop_euler_char}
	Let $\cM$ be a Cartier module on an ordinary abelian variety $A$. Then $\chi_{ss}(A, \cM) \geq 0$.
\end{prop_letter}

\begin{rem_blank}
	We expect that a more general statement should hold, namely that if $\cM$ is a Cartier module on an abelian variety without simple factors of $p$--rank zero, then $\chi_{ss}(A, \cM) \geq 0$. Since the author was only able to tackle special cases of this more general question, we leave it for future work. On the other hand, the result fails for arbitrary abelian varieties, since for example $\chi_{ss}(E, \omega_E) = -1$ whenever $E$ is supersingular elliptic curve.
\end{rem_blank}

We want to point out that the proof does not use any twist as in characteristic zero, because the semistable Euler characteristic is \emph{not} invariant under twists (see \cite[Remark 3.4.3]{Baudin_Positive_characteristic_generic_vanishing_theory}). The proof uses the positive characteristic version of generic vanishing to transfer this cohomological question about Cartier modules to a commutative algebraic version about other objects with certain arithmetic actions, called \emph{$V$--modules} (see \autoref{def:V-module}). We then prove the $V$--module reformulation of \autoref{intro_prop_euler_char} by hand. We refer the reader to \autoref{section_Euler_char_V_modules} for further details.

\subsubsection{Grauert--Riemenschneider vanishing in positive characteristic} Similarly as above, it is now well--known that Grauert--Riemenschneider vanishing fails in positive characteristic. One can for example take a cone over a variety over which Kodaira vanishing fails (see \cite[Example 3.11]{Hacon_Kovacs_GV_fails_in_pos_char}).

With the same philosophy as before, one would want to say the following: even though GR vanishing is known to fail in positive characteristic (e.g. \cite[Example 3.11]{Hacon_Kovacs_GV_fails_in_pos_char}), does it hold up to nilpotence? In particular, do we have \[ \chi_{ss}(X, \omega_X) = \chi_{ss}(\Alb(X), a_*\omega_X), \] so that we can conclude the proof by \autoref{intro_prop_euler_char}? 

Unfortunately, GR vanishing does not even hold up to nilpotence (\cite{Baudin_Bernasconi_Kawakami_Frobenius_GR_fails}), so there really is an issue here. Nevertheless, we were recently able to prove a Witt vector version of GR vanishing, namely that in our situation, we have \[ R^ia_*W\omega_{X, \bQ} = 0 \] for all $i > 0$ (see \cite{Baudin_Witt_GR_vanishing_and_applications}). With a bit of trickery, this tells us that the right object to work with in our context is not $a_*\omega_X$, but in fact \[ (a_*W\omega_X)/p. \] 

\begin{thm_letter}[{\autoref{weak_GR_sheaf}, \autoref{cohomology_fg_mod_p}}]\label{intro_weak_GR_sheaf}
	Let $\pi \colon X \to Y$ be a proper and generically finite morphism of varieties, with $X$ smooth. Then there exists a naturally induced sub--Cartier crystal $\omega_{\pi, \GR}$ of $\pi_*\omega_X$ such that \[ \lim_e F^e_*\omega_{\pi, \GR} \cong (\pi_*W\omega_X)/p. \] If $X$ and $Y$ are furthermore proper, then \[ \chi_{ss}(X, \omega_X) = \chi_{ss}(Y, \omega_{\pi, \GR}). \]
\end{thm_letter}

We call this sheaf the \emph{weak Grauert--Riemenschneider sheaf} with respect to $\pi$, in analogy with the notion of Grauert--Riemenschneider sheaf in characteristic zero (although $\omega_X$ and $\omega_{\pi, \GR}$ might not have the same cohomology groups as one would expect in characteristic zero, they at least share the same semistable Euler characteristic). We believe that this statement is of independent interest. 

\begin{rem_blank}
	As a consequence of \cite[Theorem 5.2.4]{Baudin_Witt_GR_vanishing_and_applications} and \cite[Theorem 1.5]{Baudin_Kawakami_Roesler_On_GR_for_klt_CM_schemes}, it holds that if $\pi \colon X \to Y$ is a resolution of singularities where $Y$ has either quasi--$F$--rational or klt and Cohen--Macaulay singularities, then $\omega_{\pi, \GR} = \omega_Y$. In particular, we can relax the smoothness assumption in \autoref{intro_main_thm} by one of these properties (assuming the existence of a resolution). 
\end{rem_blank}

Combining \autoref{intro_weak_GR_sheaf} and \autoref{intro_prop_euler_char}, we immediately obtain a proof of \autoref{intro_main_thm_ss}. Let us finish this introduction by explaining ideas around the proof of the second part of \autoref{intro_weak_GR_sheaf} (the first part is an explicit technical computation). There are three main observations to make: 

\begin{enumerate}
	\item $\chi_{ss}(X, \omega_X) = \chi(X, W\omega_X/p)$ and $\chi(A, (a_*W\omega_X)/p) = \chi_{ss}(A, \omega_{a, \GR})$;
	\item $\chi(X, W\omega_X/p) = \chi(X, W\omega_{X, \bQ})$ and $\chi(A, a_*W\omega_{X, \bQ}) = \chi(A, (a_*W\omega_X)/p)$;
	\item $\chi(X, W\omega_{X, \bQ}) = \chi(A, a_*W\omega_{X, \bQ})$.
\end{enumerate}

\noindent Combining these three observations give us that \[ \chi_{ss}(X, \omega_X) = \chi_{ss}(A, \omega_{a, \GR}),  \] so we are left to explain why these three statements hold. 

The first observation essentially follows from the definition of $\chi_{ss}$, as well as general properties of $W\omega_X$ and the fact that $\lim_e F^e_*\omega_{a, \GR} \cong (a_*W\omega_X)/p$, while the third observation is an immediate application of our Witt vector version of Grauert--Riemenschneider vanishing. 

Finally, the second observation is an incarnation of the fact that if $M^{\bullet}$ is a bounded complex of finitely generated $R$-modules with $(R, k)$ a regular local ring and $K \coloneqq \Frac(R)$, then \[ \chi_k(M^{\bullet} \otimes_R^L k) = \chi_K(M^{\bullet} \otimes_R^L K). \] A typical application, well--known to topologists, is that if $X$ is a (nice enough) topological space, then the Euler characteristic of $X$ with coefficients in a given field does not depend on this field (although the individual singular cohomology groups do!). In our case, we apply this to $R = W(k)$, and to $M^{\bullet} = \RGamma(X, W\omega_X)$ and $\RGamma(A, a_*W\omega_X)$. Of course, we need to prove beforehand that these cohomology groups are finitely generated. This is all carried in \autoref{section_weak_GR_sheaf}.

%
%
%
%
%

\subsection{Notations}

\begin{itemize}
	\item We fix once and for all a prime number $p > 0$. All rings and schemes in this paper are defined over $\bF_p$.
	
	\item The symbol $k$ always refers to a perfect field.
	
	\item A variety is an integral scheme $X$ which is separated and of finite type over $k$.
	
	\item The symbol $F$ always denotes the absolute Frobenius of a $\bF_p$-scheme. An $\bF_p$-scheme $X$ is $F$-finite if its absolute Frobenius morphism $F \colon X \to X$ is finite.

	\item Given a complex $C^{\bullet}$ with values in some abelian category and $i \in \bZ$, we denote by $\cH^i(C^{\bullet})$ its $i$-th cohomology object. When the context is clear, we may simply write $C$ instead of $C^{\bullet}$.
	
	\item Given a Noetherian ring $R$, we let $D_{\mathrm{fg}}(R)$ denote the derived category of $R$--modules with finitely generated cohomology modules.
	
	\item The symbol $A$ always denotes an abelian variety over an algebraically closed field, $\hat{A}$ denotes its dual, and $\cP$ denotes the normalized Poincaré bundle on $A \times \hat{A}$. Furthermore, given a closed point $\alpha \in \bighat{A}$, we write $\cP_\alpha \coloneqq \cP|_{A \times \alpha} \in \Pic^0(A)$.
	
	\item Given an $\bF_p$-scheme $X$ and $n \geq 1$, we write $W_nX$ for the scheme given by the ringed space $(X, W_n\cO_X)$. Given a morphism $f \colon X \to Y$, we still denote by $f$ the naturally induced morphism from $W_nX \to W_nY$.
	
	\item Given a separated morphism $f \colon X \to Y$ of finite type, we denote by $f^!$ the functor defined in \stacksproj{0A9Y}.
	
	\item Given a sheaf of abelian groups $\cF$ and an integer $m \geq 1$, the subsheaf $\cF[m] \inc \cF$ denotes the sheaf of $m$-torsion elements.
\end{itemize}

\subsection{Acknowledgments}
I would like to thank Fabio Bernasconi, Adrian Langer and Zsolt Patakfalvi for insightful discussions about the content of this article.

Financial support was provided by grant $\#$200020B/192035 from the Swiss National Science Foundation (FNS/SNF), and by grant $\#$804334 from the European Research Council (ERC).

\section{Preliminaries}

Here is a general lemma about Witt schemes that we will use throughout, without further mention.

\begin{lem}
	Let $f \colon X \to Y$ be a morphism of Noetherian, $F$--finite $\bF_p$--schemes.
	\begin{itemize}
		\item For all $n \geq 1$, the scheme $W_nX$ is also Noetherian, and the induced Frobenius $F \colon W_nX \to W_nX$ is finite.
		\item If $f$ is of finite type (resp. separated, proper), then so is the induced map $W_nX \to W_nY$.
	\end{itemize}
\end{lem}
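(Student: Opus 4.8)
The plan is to reduce everything to statements about the rings $W_n(R)$ and the functor $R \mapsto W_n(R)$, exploiting throughout the \emph{Verschiebung filtration}
\[ W_n(R) \supseteq VW_{n-1}(R) \supseteq V^2 W_{n-2}(R) \supseteq \cdots \supseteq V^{n-1}W_1(R) \supseteq 0. \]
Here each $V^iW_{n-i}(R)$ is an ideal, and the projection formula $y \cdot V^i(x) = V^i(F^i(y)x)$ identifies the $i$-th graded piece, as a module over $R = W_n(R)/VW_{n-1}(R)$, with $F^i_* R$ (multiplication by $y$ acting through $y \mapsto y^{p^i}$ on the first Witt component). Since $p \cdot W_n(R) \inc VW_{n-1}(R)$ and $p^n = 0$, the ideal $VW_{n-1}(R)$ is nilpotent; consequently, affine-locally $W_nX = \Spec W_n(R)$ and the closed immersion $X = W_1X \inj W_nX$ is a nilpotent thickening, in particular a universal homeomorphism inducing the identity on underlying spaces. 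I first record that $W_nX$ has the same underlying space as $X$ and that $(W_nX)_{\red} = X_{\red}$, so that quasi-compactness and quasi-separatedness of $W_nf$ are automatic.

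For the first bullet I argue as follows. Because $R$ is $F$-finite, each graded piece $F^i_* R$ is a finitely generated $R$-module, hence Noetherian; thus $W_n(R)$ carries a finite filtration by ideals with Noetherian subquotients and is therefore a Noetherian ring, giving that $W_nX$ is Noetherian. For finiteness of the Frobenius I use that on an $\bF_p$-algebra the Witt-vector Frobenius is $W_n(F_R)$, raising each component to the $p$-th power, and induct on $n$ via the exact sequence $0 \to V^{n-1}W_1(R) \to W_n(R) \to W_{n-1}(R) \to 0$. On the sub $V^{n-1}W_1(R)$, identified with $R$ through $V^{n-1}$, the Witt--Frobenius acts through $F_R$, which is a finite module over $W_n(R)$ (via $F$) by $F$-finiteness; on the quotient it acts as $W_{n-1}(F_R)$, finite by the inductive hypothesis. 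Since the extension is by finite modules, $W_n(R)$ is finite over itself via $F$, i.e. $F \colon W_nX \to W_nX$ is finite.

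For finite type in the second bullet, write $f$ locally as $A \to B$ with $B = A[b_1, \dots, b_m]$, and set $T \coloneqq W_n(A)[[b_1], \dots, [b_m]] \inc W_n(B)$, a finite type $W_n(A)$-algebra generated by Teichmüller lifts. Since $T \surj B$ we have $W_n(B) = T + VW_{n-1}(B)$; and as $X$ is $F$-finite, $B$ is $F$-finite, so the Verschiebung filtration exhibits $VW_{n-1}(B)$ as a finite $T$-module (its graded pieces are the finite $B$-modules $F^i_* B$, and $B$ is a quotient of $T$). Hence $W_n(B)$ is a finite $T$-module, so of finite type over $W_n(A)$, and $W_nf$ is of finite type. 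For the separated and proper cases I avoid the diagonal entirely, since $W_n$ does not commute with fibre products, and instead use the valuative criteria: as $W_nX, W_nY$ are Noetherian, $W_nf$ is qcqs, so separatedness (resp. universal closedness, properness) is equivalent to the uniqueness (resp. existence, both) part of the valuative criterion over discrete valuation rings. The crucial point is that a DVR $\cO$ and its fraction field are reduced, so every morphism from them into $W_nX$ factors through $(W_nX)_{\red} = X_{\red}$; thus $\Hom(\Spec \cO, W_nX) = \Hom(\Spec \cO, X)$, and likewise over the base, compatibly with $W_nf$ and $f$. The lifting problems for $W_nf$ therefore coincide with those for $f$, so the hypothesis on $f$ transfers verbatim.

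The main obstacle is precisely this last step: because $W_n$ is incompatible with fibre products one cannot test separatedness or properness on the diagonal or by a direct base-change computation, and the honest input is the identification of the underlying reduced schemes together with the valuative criteria. A secondary technical point, where $F$-finiteness is genuinely used beyond Noetherianity, is the finiteness of the module $VW_{n-1}(B)$ in the finite-type step and of the Frobenius on the bottom graded piece in the first bullet.
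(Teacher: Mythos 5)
Your first bullet and the finite-type case are correct, and they are essentially the argument of Langer--Zink (the paper itself gives no proof, only a citation to their Appendix A.1): the Verschiebung filtration with graded pieces $F^i_*R$ gives Noetherianity of $W_n(R)$, Teichm\"uller lifts together with module-finiteness of $VW_{n-1}(B)$ over $T$ give finite type, and your induction for finiteness of the Witt--Frobenius also works.

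The gap is in the separated case. You assert that, because $W_nX$ and $W_nY$ are Noetherian and $W_nf$ is qcqs, separatedness of $W_nf$ is \emph{equivalent} to the uniqueness part of the valuative criterion over \emph{discrete} valuation rings. The direction you need (DVR-uniqueness implies separated) is not a standard theorem in this generality: all standard references (EGA II 7.3.8, Hartshorne Exercise II.4.11, the Stacks Project's refined valuative criteria) assume in addition that the morphism is of finite type over a locally Noetherian base, and this hypothesis is not cosmetic. The proof must realize specializations in $W_nX \times_{W_nY} W_nX$ (where the closure of the diagonal lives) by DVRs, and that fiber product need not be locally Noetherian when $f$ is merely separated: already for $f$ itself, take $X = \Spec K$ with $K$ a perfect (hence $F$-finite) field of infinite transcendence degree over $\bF_p$ and $Y = \Spec \bF_p$; then $K \otimes_{\bF_p} K$ is not Noetherian. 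Moreover, a valuation ring of rank $\geq 2$ is never dominated by a DVR (domination induces an order-embedding of value groups), so specializations witnessed only by higher-rank valuations are invisible to DVRs. Since in the ``separated'' case of the lemma $f$ is \emph{not} assumed of finite type, this step is unjustified. (The proper case is fine: there $f$ is of finite type, hence so is $W_nf$ by your earlier step, and the finite-type DVR criteria apply --- though note that properness is not ``equivalent to both parts'' of the criterion; finite type has to be fed in as a hypothesis, as you implicitly do.)

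The fix is easy and keeps your key observation intact: run the identical argument with \emph{arbitrary} valuation rings, which are domains and hence reduced, so your identification $\Hom(\Spec \cO, W_nX) = \Hom(\Spec \cO, X)$ applies verbatim; the valuative criteria over arbitrary valuation rings (uniqueness plus quasi-separated implies separated, existence plus quasi-compact implies universally closed) hold with no Noetherian or finite-type hypotheses. Even more directly: separatedness and universal closedness depend only on the underlying reduced morphism, since $X_{\red} \times_{Y_{\red}} X_{\red} \to X \times_Y X$ is a surjective closed immersion identifying the two diagonals topologically; as $(W_nf)_{\red} = f_{\red}$, both properties transfer at once, and properness then follows from these together with your finite-type case.
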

\begin{proof}
	This is all contained in \cite[Appendix A.1]{Langer_Zink_De_Rham_Witt_cohomology_for_a_proper_and_smooth_morphism}.
\end{proof}

\subsection{Witt--Cartier modules and crystals}

Throughout, $X$ denotes a Noetherian $F$--finite $\bF_p$-scheme. 

\begin{defn}
	\begin{itemize}
		\item A \emph{$W_n$--Cartier module} is a pair ($\cM$, $\theta$) where $\cM$ is a coherent $W_n\cO_X$-module and $\theta \colon F_*\cM \to \cM$ is a morphism. We call $\theta$ the \emph{structural morphism} of $\cM$. 
		
		\item A morphism of $W_n$--Cartier modules $h \colon (\cM_1, \theta_1) \to (\cM_2, \theta_2)$ is a morphism of underlying $W_n\cO_X$-modules $h \colon \cM_1 \to \cM_2$ such $h \circ \theta_1 = \theta_2 \circ F_*h$. The category of $W_n$--Cartier modules is denoted $\Coh_{W_nX}^C$. When $n = 1$, we will call them Cartier modules, and denote their category by $\Coh_X^C$.
	\end{itemize}	
\end{defn}

\begin{rem}
	In other references (e.g. \cite{Baudin_Duality_between_Witt_Cartier_crystals_and_perverse_sheaves}), the objects we defined above are called coherent $W_n$--Cartier modules (the definition in \emph{loc. cit.} does not require (quasi)--coherence). Since all sheaves we will consider in this paper are coherent, this should not cause any trouble.
\end{rem}

Note that one can take the pushforward of a $W_n$--Cartier module: if $f \colon X \to Y$ is a proper morphism of Noetherian, $F$--finite $\bF_p$--schemes and $(\cM, \theta)$ is a $W_n$--Cartier module on $X$, then $(f_*\cM, f_*\theta)$ defines a $W_n$--Cartier module on $Y$ (the properness assumption is only here to ensure that $f_*\cM$ is coherent).

\begin{example}
	Arguably the most important example is the canonical $W_n$--dualizing sheaf $W_n\omega_X$, together with the Cartier operator, see \autoref{section_witt_dc}.
\end{example}

\begin{notation}\label{rem:def_Cartier_mod}
	\begin{enumerate}
		\item\label{itm:abuse_iterates_structure_map} Let $(\cM, \theta)$ be a $W_n$-Cartier module. We will abuse notations as follows: \[ \theta^e \coloneqq \theta \circ F_*\theta \circ \dots \circ F^{e-1}_*\theta \colon F^{e}_*\cM \to \cM. \]
		\item\label{itm:notation H^0_ss} Given a proper scheme $X$ over an algebraically closed field $k$ and $(\cM, \theta) \in \Coh_{W_nX}^{C}$, we set \[ H^i_{ss, \theta}(X, \cM) \coloneqq \bigcap_{e > 0} \im\left(H^i(X, F^{e}_*\cM) \xto{\theta^e} H^i(X, \cM)\right) \inc H^i(X, \cM). \] We also write $h^i_{ss, \theta}(X, \cM) \coloneqq \length H^i_{ss, \theta}(X, \cM)$ 
		and  \[ \chi_{ss, \theta}(\cM) \coloneqq \sum_i (-1)^ih^i_{ss, \theta}(X, \cM). \] Whenever the context is clear, we shall remove the $\theta$ from this notation.
	\end{enumerate}
\end{notation}

\begin{defn}\label{def:Cartier_crystal}
	A $W_n$--Cartier module is said to be \emph{nilpotent} if its structural morphism is nilpotent. These objects form a Serre subcategory of $\Coh_{W_nX}^{C}$ (meaning that being nilpotent is stable under subobjects, quotients and extensions). The corresponding quotient category (see \stacksproj{02MS}) is denoted by $\Crys_{W_nX}^{C}$, and the objects of this quotient category are called \emph{$W_n$--Cartier crystals}. 
\end{defn}

Intuitively, working in the category of crystals allows us to forget about nilpotent phenomena. For example, a $W_n$--Cartier module is the zero object as a $W_n$-Cartier crystal if and only if it is nilpotent. More generally, a morphism $f \colon \cM \to \cN$ of $W_n$-Cartier modules is an isomorphism at the level of crystals if and only if both $\ker(f)$ and $\coker(f)$ are nilpotent. Such a morphism is called a \emph{nil-isomorphism}.

\begin{notation}\label{notation_crystals}
	\begin{itemize}
		\item Morphisms in $\Crys_{W_nX}^C$ will be denoted by the symbol $\dashrightarrow$. Those which come from a morphism in $\Coh_{W_nX}^C$ will be denoted by a full arrow $\to$.
		\item If $\cM_1$ and $\cM_2$ are two $W_n$--Cartier modules which are isomorphic as crystals, then we shall write $\cM_1 \sim_C \cM_2$.
	\end{itemize}
\end{notation}

\begin{defn}
	Let $(\cM, \theta)$ be a $W_n$-Cartier module. Then the \emph{perfection} of $(\cM, \theta)$ is $\cM^{\perf} \coloneqq \lim_e F^e_*\cM$.
\end{defn}

\begin{lem}\label{Gabber_finiteness}
	Let $\cM$ be a $W_n$--Cartier module on $X$.
	\begin{enumerate}
		\item\label{itm:Gabber_fin} The inverse system $\{F^{e}_*\cM\}_{e \geq 1}$ satisfies the Mittag--Leffler condition. In particular, $\cM^{\perf} = \Rlim F^{e}_*\cM$ and perfection is exact on $W_n$--Cartier modules.
		\item We have that $\cM \sim_C 0$ if and only if $\cM^{\perf} = 0$.
		\item\label{itm:nil_iso_iff_perf_iso} For any morphism $f \colon \cM \to \cN$ of $W_n$--Cartier modules, $f$ is a nil--isomorphism if and only if $f^{\perf}$ is an isomorphism.
	\end{enumerate}
\end{lem}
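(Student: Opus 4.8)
The three parts are arranged in logical order, and part~\ref{itm:Gabber_fin} carries essentially all the content; the other two are formal consequences. Throughout I would regard $(\cM, \theta)$ simply as a Cartier module on the Noetherian $F$--finite scheme $W_nX$ (the preliminary lemma guarantees $W_nX$ is Noetherian with finite Frobenius), so that the classical finiteness theory for Cartier modules applies verbatim.

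For part~\ref{itm:Gabber_fin}, the plan is to reduce the Mittag--Leffler condition to the stabilization of the descending chain of coherent subsheaves
\[ \cM \cni \im(\theta) \cni \im(\theta^2) \cni \cdots \cni \im(\theta^e) \cni \cdots. \]
Indeed, the transition map $F^{e'}_*\cM \to F^{e}_*\cM$ for $e' \geq e$ is exactly $F^e_*(\theta^{e'-e})$, and since the Frobenius of $W_nX$ is finite, hence affine, the functor $F^e_*$ is exact and faithful; thus $\im(F^{e'}_*\cM \to F^e_*\cM) = F^e_*\bigl(\im(\theta^{e'-e})\bigr)$, and these stabilize in $e'$ for every fixed $e$ if and only if the single chain $\{\im(\theta^e)\}_e$ stabilizes. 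This stabilization is precisely Gabber's finiteness result for coherent Cartier modules on a Noetherian $F$--finite scheme, invoked here on $W_nX$; this is the one genuinely nontrivial input, and I expect it to be the main obstacle (everything else is formal). Granting the Mittag--Leffler property, the identity $\cM^{\perf} = \Rlim_e F^e_*\cM$ follows because $R^1\lim$ of a Mittag--Leffler system vanishes, so $\Rlim$ is concentrated in degree $0$. Exactness of perfection then follows by applying the exact functors $F^e_*$ to a short exact sequence of $W_n$--Cartier modules and taking $\lim$ of the resulting short exact sequence of Mittag--Leffler inverse systems: the only obstruction to right exactness is $R^1\lim$ of the sub--system, which vanishes by part~\ref{itm:Gabber_fin}.

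For part (2): if $\cM \sim_C 0$, then $\theta^N = 0$ for some $N$, so every composite of $N$ consecutive transition maps is zero and hence $\cM^{\perf} = \lim_e F^e_*\cM = 0$. Conversely, for a Mittag--Leffler system the image of the natural projection $\cM^{\perf} \to \cM$ is exactly the stable image $\bigcap_e \im(\theta^e)$ (the containment $\subseteq$ is clear since each compatible element lands in every $\im(\theta^e)$, and the reverse is the standard lifting argument available for Mittag--Leffler systems). If $\cM^{\perf} = 0$ this intersection vanishes, and since the chain stabilizes by part~\ref{itm:Gabber_fin} and $X$ is quasi--compact, we get $\im(\theta^e) = 0$, i.e.\ $\theta^e = 0$, for $e \gg 0$; thus $\cM$ is nilpotent, that is $\cM \sim_C 0$.

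For part~\ref{itm:nil_iso_iff_perf_iso}, I would factor $f$ through its image and apply exactness of perfection to the two short exact sequences
\[ 0 \to \ker f \to \cM \to \im f \to 0, \qquad 0 \to \im f \to \cN \to \coker f \to 0, \]
obtaining exact sequences $0 \to (\ker f)^{\perf} \to \cM^{\perf} \xrightarrow{a} (\im f)^{\perf} \to 0$ and $0 \to (\im f)^{\perf} \xrightarrow{b} \cN^{\perf} \to (\coker f)^{\perf} \to 0$, with $f^{\perf} = b \circ a$. By definition $f$ is a nil--isomorphism if and only if $\ker f$ and $\coker f$ are nilpotent, which by part (2) means $(\ker f)^{\perf} = (\coker f)^{\perf} = 0$, i.e.\ $a$ and $b$ are both isomorphisms, i.e.\ $f^{\perf}$ is an isomorphism. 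For the converse one uses that $a$ is always surjective and $b$ always injective, so if the composite $b \circ a$ is an isomorphism then $a$ is injective (hence bijective) and $b$ is surjective (hence bijective), forcing $(\ker f)^{\perf} = (\coker f)^{\perf} = 0$ and hence nilpotence of $\ker f$ and $\coker f$ by part (2). This completes the plan.
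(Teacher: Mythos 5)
Your proposal is correct and takes the same route as the paper: the paper's own proof is just a citation to Lemma 4.4.2 of the author's earlier duality paper together with Gabber's Lemma 13.1, and your argument is a faithful unpacking of exactly that reduction — Gabber's stabilization of the descending chain $\im(\theta) \cni \im(\theta^2) \cni \cdots$ is the single nontrivial input, with the Mittag--Leffler property, exactness of perfection, and parts (2) and (3) then deduced formally, just as in the cited source. The only point worth making explicit (standard, not a gap) is that the Mittag--Leffler and $R^1\lim$ statements concern inverse systems of sheaves rather than modules, so they should be verified on affine opens, where taking sections of quasi-coherent sheaves is exact and the sheaf-level claims reduce to the module-level ones.
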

\begin{proof}
	This is \cite[Lemma 4.4.2]{Baudin_Duality_between_Witt_Cartier_crystals_and_perverse_sheaves} (see also \cite[Lemma 13.1]{Gabber_notes_on_some_t_structures} for the first statement).
\end{proof}

\begin{rem}\label{rem_ss_cohomology}
	Given a Cartier module $\cM$ on a proper scheme $X$ over an algebraically closed field, it follows from \autoref{Gabber_finiteness}.\autoref{itm:Gabber_fin} that $H^i_{ss}(X, \cM) = H^i(X, \cM^{\perf})$. In particular, the groups $H^i(X, \cM^{\perf})$ are finite--dimensional.
\end{rem}

\subsection{Witt dualizing complexes}\label{section_witt_dc}

Throughout, $X$ denotes a fixed variety with structural morphism $g \colon X \to \Spec k$. Here is our main object of study:

\begin{defn}
	The \emph{canonical $W_n$-dualizing complex} is by definition $W_n\omega_X^{\bullet} \coloneqq g^!\cO_{W_n(k)}$. By \stacksproj{0AA3}, this is a dualizing complex on $W_nX$.
\end{defn}
\begin{rem}
	When $X$ is smooth, this complex agrees with the last piece of the de Rham--Witt complex from \cite{Illusie_Complexe_de_de_Rham_Witt_et_cohomologie_cristalline} (up to a shift), see \cite[Section I]{Ekedahl_Duality_Hodge_Witt}.
\end{rem}

In \cite[Section 2.2]{Baudin_Witt_GR_vanishing_and_applications} (see also \cite[Appendix]{Quasi_F_splittings_III}), we explain how to construct the Cartier operators $C \colon F_*W_n\omega_X^{\bullet} \to W_n\omega_X^{\bullet}$ and restriction maps $R \colon W_{n + 1}\omega_X^{\bullet} \to W_n\omega_X^{\bullet}$, satisfying $R \circ C = C \circ F_*R$. In particular, each sheaf $\cH^i(W_n\omega_X^{\bullet})$ is a $W_n$--Cartier module, and each map $\cH^i(R) \colon \cH^i(W_{n + 1}\omega_X^{\bullet}) \to \cH^i(W_n\omega_X^{\bullet})$ is a morphism of $W_{n + 1}$--Cartier modules. 

\begin{notation}
	We set \[ (W_n\omega_X^{\bullet})^{\perf} \coloneqq \Rlim_e F^e_*W_n\omega_X^{\bullet}, \] where the transition maps are given by the Cartier operators $C$. Note that by exactness of perfection (see \autoref{Gabber_finiteness}.\autoref{itm:Gabber_fin}), this does not cause any clash of notation with the perfection functor on $W_n$--Cartier modules.
\end{notation}

Since the restriction maps commute with the Cartier operators, there are induced morphisms $R \colon (W_{n + 1}\omega_X^{\bullet})^{\perf} \to (W_n\omega_X^{\bullet})^{\perf}$.

\begin{defn}
	We define the \emph{canonical Witt--dualizing complex} to be \[ W\omega_X^{\bullet} \coloneqq \Rlim \: W_n\omega_X^{\bullet}, \] where the transition maps are the restrictions $R \colon W_{n + 1}\omega_X^{\bullet} \to W_n\omega_X^{\bullet}$.
\end{defn}

An important (any maybe surprising) feature of canonical Witt dualizing complexes is the following:
\begin{lem}\label{Witt_omega_coperfect}
	The induced morphism $C \colon F_*W\omega_X^{\bullet} \to W\omega_X^{\bullet}$ is an isomorphism. In particular, there is a natural isomorphism \[ W\omega_X^{\bullet} \to \Rlim_n \: (W_n\omega_X^{\bullet})^{\perf}. \]
\end{lem}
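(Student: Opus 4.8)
The statement has two parts, and I would first reduce the second to the first. Granting that $C\colon F_*W\omega_X^{\bullet}\to W\omega_X^{\bullet}$ is an isomorphism, unwind the definition $(W_n\omega_X^{\bullet})^{\perf}=\Rlim_e F^e_*W_n\omega_X^{\bullet}$ and compute
\[ \Rlim_n (W_n\omega_X^{\bullet})^{\perf}=\Rlim_n\Rlim_e F^e_*W_n\omega_X^{\bullet}=\Rlim_e\Rlim_n F^e_*W_n\omega_X^{\bullet}, \]
swapping the two homotopy limits (limits commute with limits). Since $F$ is finite, $F_*$ is exact and commutes with $\Rlim_n$, so this equals $\Rlim_e F^e_*\Rlim_n W_n\omega_X^{\bullet}=\Rlim_e F^e_*W\omega_X^{\bullet}$. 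The transition maps of this last tower are the $F^e_*C$, which are isomorphisms by the first assertion, so the projection to the $e=0$ term is an isomorphism; its inverse is precisely the natural map $W\omega_X^{\bullet}\to \Rlim_n(W_n\omega_X^{\bullet})^{\perf}$ in the statement. Hence the entire content lies in the first assertion, and that is where I would concentrate.

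To invert $C$ on the Witt limit, my plan is to exhibit an explicit inverse coming from a \emph{dual Verschiebung}. The key algebraic input is that on $\bF_p$-algebras the Witt-vector Frobenius $F_{\mathrm{Witt}}\colon W_{n+1}\cO_X\to W_n\cO_X$ agrees with the functorial $p$-power map and factors in two ways as $F_{\mathrm{Witt}}=F\circ R=R\circ F$, where $F=W_n(\phi)$ is the Frobenius used to define $C$ and $R$ is the restriction. Dualizing via Grothendieck--Serre duality on the schemes $W_nX$ (using that $F$ and $R$ are finite, that $F^!$ sends the canonical dualizing complex to itself because $k$ is perfect, and that $R^!$ identifies $W_{n+1}\omega_X^{\bullet}$ with $W_n\omega_X^{\bullet}$), the scheme map underlying $F_{\mathrm{Witt}}$ produces a map
\[ V\colon W_{n+1}\omega_X^{\bullet}\to F_*W_n\omega_X^{\bullet}, \]
and the two factorizations dualize to the relations $C\circ V=R$ and $V\circ C=F_*R$ at the appropriate Witt level. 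For $X$ smooth these are the classical de Rham--Witt identities on the top piece $W_n\Omega_X^{\dim X}$ (\cite{Illusie_Complexe_de_de_Rham_Witt_et_cohomologie_cristalline, Ekedahl_Duality_Hodge_Witt}); in general they are the relations satisfied by the maps constructed in \cite[Section 2.2]{Baudin_Witt_GR_vanishing_and_applications}.

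Granting this, I would conclude as follows. After checking that the $V$ are compatible with $R$, they assemble to a single map $V\colon W\omega_X^{\bullet}\to F_*W\omega_X^{\bullet}$ on $W\omega_X^{\bullet}=\Rlim_n(W_n\omega_X^{\bullet},R)$. On this inverse limit the restriction $R$ \emph{is} the transition map and therefore acts as the identity, so the level-$n$ relations $C\circ V=R$ and $V\circ C=F_*R$ pass to $C\circ V=\id$ and $V\circ C=\id$ on the limit. Thus $V$ is a two-sided inverse of $C$, which proves the first assertion and, with the first paragraph, the whole lemma.

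The hard part is the duality bookkeeping behind the dual Verschiebung: constructing $V$ with the correct $F_*$-twists and Witt-level shifts, and verifying the two relations and the $R$-compatibility \emph{at the level of complexes} rather than just of cohomology sheaves $\cH^i$. A secondary technical point is the derived-limit formalism — justifying that $F_*$ commutes with $\Rlim_n$, that the relevant towers are Mittag--Leffler (as in \autoref{Gabber_finiteness}) so that the $\Rlim$'s compute what one expects, and that the limit of a tower with isomorphic transition maps collapses. Both are, however, of the type already handled in the construction of $C$ and $R$ in \cite{Baudin_Witt_GR_vanishing_and_applications}, so I expect the real work to be the clean statement and verification of the relations $C\circ V=R$ and $V\circ C=F_*R$.
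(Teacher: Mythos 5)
Your overall architecture is the right one, and in fact the two displayed relations $C\circ V=R$ and $V\circ C=F_*R$, together with the observation that the shift-by-$R$ map of a tower induces the identity on $\Rlim$, are exactly what makes the limit argument work; the reduction of the second assertion to the first is also fine. (For the record, the paper itself gives no argument here: it simply cites \cite[Lemma 2.2.5]{Baudin_Witt_GR_vanishing_and_applications}, so what you are reconstructing is that external proof.)

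The genuine gap is in where you get $V$ and the relations from. Grothendieck duality is \emph{contravariant}: dualizing the scheme map underlying $F_{\mathrm{Witt}}\colon W_{n+1}\cO_X\to W_n\cO_X$ (i.e.\ taking the trace of the finite morphism $W_nX\to W_{n+1}X$) produces a map $F_*W_n\omega_X^{\bullet}\to W_{n+1}\omega_X^{\bullet}$, which \emph{raises} the Witt level --- the opposite of the direction you wrote (for $X$ smooth this is the de Rham--Witt $V$ on top forms, which is presumably where the name comes from, but it is not your map). Likewise, dualizing the two factorizations $F_{\mathrm{Witt}}=W_n(\phi)\circ R_{\mathrm{Witt}}=R_{\mathrm{Witt}}\circ W_{n+1}(\phi)$ only yields the commutation of $C$ with the \emph{trace of the closed immersion} $W_nX\to W_{n+1}X$ (the dual of the Witt restriction $R_{\mathrm{Witt}}$, again level-raising); the restriction map $R\colon W_{n+1}\omega_X^{\bullet}\to W_n\omega_X^{\bullet}$ never appears in these identities, and level-raising maps give no candidate inverse for $C$ on the inverse limit $W\omega_X^{\bullet}=\Rlim_n W_n\omega_X^{\bullet}$. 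So, as written, the key step fails. The fix is to dualize the other half of the Witt-vector package: take $V$ to be the dual of the \emph{Verschiebung} $V_{\mathrm{Witt}}\colon W_n\cO_X\to W_{n+1}\cO_X$, which is $W_{n+1}\cO_X$-linear for the module structure via $F_{\mathrm{Witt}}$ on the source (projection formula $V_{\mathrm{Witt}}(F_{\mathrm{Witt}}(a)x)=a\,V_{\mathrm{Witt}}(x)$), the duality along the finite morphism underlying $F_{\mathrm{Witt}}$ being what identifies its dual as a map $W_{n+1}\omega_X^{\bullet}\to F_*W_n\omega_X^{\bullet}$. Your two relations then come from dualizing the two factorizations of the multiplication-by-$p$ map
\[ \underline{p}_n \coloneqq V_{\mathrm{Witt}}\circ W_n(\phi) = W_{n+1}(\phi)\circ V_{\mathrm{Witt}} \colon W_n\cO_X\to W_{n+1}\cO_X, \]
together with the compatibility --- which must be checked against the construction in \cite[Section 2.2]{Baudin_Witt_GR_vanishing_and_applications} --- that the restriction map $R$ there is precisely the dual of $\underline{p}_n$. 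With $V$ and the relations sourced this way (and the $R$-compatibility of $V$, dual to $\underline{p}\circ V_{\mathrm{Witt}}=V_{\mathrm{Witt}}\circ\underline{p}$), your limit argument goes through verbatim.
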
 
\begin{proof}
	This is \cite[Lemma 2.2.5]{Baudin_Witt_GR_vanishing_and_applications}.
\end{proof}

\begin{lem}\label{cone_of_p_Witt_omega}
	The following is an exact triangle
	\[ \begin{tikzcd}
		W\omega_X^{\bullet} \arrow[r, "p^m"] & W\omega_X^{\bullet} \arrow[r] & (W_m\omega_X^{\bullet})^{\perf} \arrow[r, "+1"] & {}
	\end{tikzcd} \] where the second map is the composition $W\omega_X^{\bullet} \xto{\cong} \Rlim_n \: (W_n\omega_X^{\bullet})^{\perf} \to (W_m\omega_X^{\bullet})^{\perf}$.
\end{lem}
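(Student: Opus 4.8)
The plan is to realize the stated triangle as the derived inverse limit over $n$ of the Verschiebung short exact sequences relating the truncated dualizing complexes, and then to convert the Verschiebung into multiplication by $p^m$ using the invertibility of the Cartier operator recorded in \autoref{Witt_omega_coperfect}. I will work throughout with the \emph{perfected} tower: by \autoref{Witt_omega_coperfect} there is a natural isomorphism $W\omega_X^{\bullet} \cong \Rlim_n (W_n\omega_X^{\bullet})^{\perf}$, under which the projection $\pi_m$ onto the $m$-th term is exactly the second map in the statement. The transition maps of this tower are induced by the restrictions $R$.

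The key structural input is the following. Dualizing (via $g^!$, i.e. Grothendieck duality) the standard short exact sequence of Witt-vector sheaves
\[ 0 \to W_n\cO_X \xrightarrow{V^m} W_{n+m}\cO_X \xrightarrow{R^n} W_m\cO_X \to 0 \]
produces an exact triangle of dualizing complexes
\[ W_n\omega_X^{\bullet} \xrightarrow{V^m} W_{n+m}\omega_X^{\bullet} \xrightarrow{R^n} W_m\omega_X^{\bullet} \xrightarrow{+1}, \]
where $R$ is the restriction and $V$ the Verschiebung, subject to the de Rham--Witt relations $RV = VR$, the compatibility $R \circ C = C \circ F_*R$ already recorded in the excerpt, and $C \circ V = V \circ C = p$; I would extract these from the duality formalism of \cite{Ekedahl_Duality_Hodge_Witt} and \cite{Baudin_Witt_GR_vanishing_and_applications}. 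Since $V$ and $R$ commute with $C$, and perfection $(-)^{\perf} = \Rlim_e F_*^e(-)$ is exact by \autoref{Gabber_finiteness}, applying it yields an exact triangle
\[ (W_n\omega_X^{\bullet})^{\perf} \xrightarrow{V^m} (W_{n+m}\omega_X^{\bullet})^{\perf} \xrightarrow{R^n} (W_m\omega_X^{\bullet})^{\perf} \xrightarrow{+1}. \]

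I would then assemble these into a short exact sequence of towers indexed by $n$. The relation $RV = VR$ makes $V^m$ a morphism of towers $\{(W_n\omega_X^{\bullet})^{\perf}\}_n \to \{(W_{n+m}\omega_X^{\bullet})^{\perf}\}_n$, both having $R$ as transition map and both cofinal with limit $W\omega_X^{\bullet}$. A direct check shows that the induced transition maps on the cokernel tower $\{(W_m\omega_X^{\bullet})^{\perf}\}_n$ are the identity (they are forced to satisfy $t \circ R^{n+1} = R^{n+1}$), so this tower is constant with limit $(W_m\omega_X^{\bullet})^{\perf}$, and the resulting map to it is the projection $\pi_m$. Taking $\Rlim_n$, which is triangulated, then produces an exact triangle
\[ W\omega_X^{\bullet} \xrightarrow{\widehat{V}^m} W\omega_X^{\bullet} \xrightarrow{\pi_m} (W_m\omega_X^{\bullet})^{\perf} \xrightarrow{+1}. \]
Finally I would replace $\widehat{V}^m$ by $p^m$: since $C$ is an isomorphism on $W\omega_X^{\bullet}$ (\autoref{Witt_omega_coperfect}) and $CV = p$, on the limit we get $\widehat{V}^m = C^{-m} p^m = p^m C^{-m}$, so $\widehat{V}^m$ and $p^m$ differ by the automorphism $C^{m}$. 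Composing the triangle with this automorphism on the source yields an isomorphic triangle whose first map is $p^m$ and whose connecting map is still $\pi_m$, which is the claim.

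The main obstacle is the middle step: producing the Verschiebung/restriction triangle on the dualizing complexes with the correct maps and the relation $p = CV$, and verifying that perfection is compatible with all of them. This is where the de Rham--Witt duality formalism does the real work; once it is in place, the inverse-limit bookkeeping and the $C$-twist at the end are formal. (Conceptually, the statement says $(W_m\omega_X^{\bullet})^{\perf} \cong W\omega_X^{\bullet}\otimes^L_{W(k)} W(k)/p^m$, with the perfection on the left precisely compensating for the failure of $W\cO_X/p^m \cong W_m\cO_X$ when $X$ is not perfect; the argument above is a concrete incarnation of this base change.)
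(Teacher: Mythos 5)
Your overall skeleton --- dualize a Witt-vector short exact sequence, assemble the resulting triangles into towers indexed by $n$, take $\Rlim_n$, and identify the first map with $p^m$ --- is the right one (the paper itself proves this lemma by citing \cite[Lemma 2.2.7]{Baudin_Witt_GR_vanishing_and_applications}, whose argument is of exactly this type). However, the step you yourself flag as carrying the real content is wrong as stated, and the error is not cosmetic. Grothendieck duality is contravariant, so dualizing
\[ 0 \to W_n\cO_X \xto{V^m} W_{n+m}\cO_X \xto{R^n} W_m\cO_X \to 0 \]
produces a triangle running $D(W_m\cO_X) \to D(W_{n+m}\cO_X) \to D(W_n\cO_X) \xto{+1}$, not one with the objects in the original order; and the duals of the maps are not what you call them. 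The dual of the sheaf-level restriction is the multiplication-by-$p$ map $\underline{p}\colon W_n\omega_X^{\bullet} \to W_{n+m}\omega_X^{\bullet}$ (just as the paper's restriction $R$ on dualizing complexes is dual to $\underline{p}$ on sheaves), while the dual of the sheaf-level Verschiebung is a Frobenius-semilinear, Cartier-twisted restriction $W_{n+m}\omega_X^{\bullet} \to F^n_*W_m\omega_X^{\bullet}$; already at a point the latter is $b \mapsto \sigma(b) \bmod p^n$, not $b \mapsto b \bmod p^n$. Consequently the two properties your argument needs cannot hold simultaneously for any one choice of ``$V$'': if $V$ is the genuine de Rham--Witt Verschiebung on top forms --- the only map satisfying your relation $CV = VC = p$, used in your final twist --- then $W_n\omega_X \xto{V^m} W_{n+m}\omega_X \xto{R^n} W_m\omega_X$ is \emph{not} exact, since by the standard filtration of the de Rham--Witt complex (\cite[I.3]{Illusie_Complexe_de_de_Rham_Witt_et_cohomologie_cristalline}) the kernel of $R^n$ in top degree is $V^mW_n\Omega^{d}_X + dV^mW_n\Omega^{d-1}_X$, which strictly contains $\im(V^m)$ in general; if instead $V$ is the dual of the restriction (the choice that does make the triangles exact), then $R \circ V^m = p^m$, so the limit of $V^m$ is already $p^m$, and $CV = pC \neq p$, so your closing step rests on a false relation.

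A second, related failure: even granting exact levelwise triangles, your ``direct check'' that the cokernel tower is constant with identity transition maps is incorrect. On sheaves one has $\underline{p}\circ V^n = V^{n+1}\circ F$ (with $F$ the same-level Witt Frobenius, whose dual is $C$), so dualizing shows that the transition maps of the cone tower are Cartier operators, not identities; the correct cone tower is $\{F^n_*W_m\omega_X^{\bullet}\}_n$ with Cartier transitions, and the fact that its $\Rlim$ is $(W_m\omega_X^{\bullet})^{\perf}$ is precisely where the perfection in the statement comes from. Because you pre-perfected, these transitions become isomorphisms and the limit object still comes out right, but the lemma asserts more than the existence of some triangle: it pins down the second map as the composition $W\omega_X^{\bullet} \cong \Rlim_n(W_n\omega_X^{\bullet})^{\perf} \to (W_m\omega_X^{\bullet})^{\perf}$, and identifying your induced map with it is exactly the verification that your suppression of the Cartier twists skips. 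The repair is to dualize $0 \to W_m\cO_X \xto{V^n} W_{n+m}\cO_X \xto{R^m} W_n\cO_X \to 0$, obtaining triangles $W_n\omega_X^{\bullet} \xto{\underline{p}^m} W_{n+m}\omega_X^{\bullet} \to F^n_*W_m\omega_X^{\bullet} \xto{+1}$, check that as $n$ varies these form a tower of triangles whose first-map limit is $p^m$ (from $R\circ\underline{p} = p$) and whose third-tower transitions are $C$, and then unwind the isomorphism of \autoref{Witt_omega_coperfect} to match the second map with the stated projection.
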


\begin{proof}
	This is \cite[Lemma 2.2.7]{Baudin_Witt_GR_vanishing_and_applications}.
\end{proof}

\begin{defn}
	The \emph{canonical $W_n$--dualizing sheaf} is $W_n\omega_X \coloneqq \cH^{-\dim(X)}(W_n\omega_X^{\bullet})$. We also set the \emph{canonical Witt--dualizing sheaf} to be $W\omega_X \coloneqq \lim_n W_n\omega_X$.
\end{defn}

\begin{rem}\label{coincides_with_what_we_know}
	If $X$ is smooth, then $W\omega_X$ agrees with the last piece of the (non--truncated) de Rham--Witt complex of \cite{Illusie_Complexe_de_de_Rham_Witt_et_cohomologie_cristalline} up to a shift by \cite[Remark 2.2.12]{Baudin_Witt_GR_vanishing_and_applications}.
\end{rem}

\section{A weak Grauert-Riemenschneider sheaf}\label{section_weak_GR_sheaf}

Throughout, we fix a proper and generically finite morphism $\pi \colon X \to Y$ of varieties with $X$ smooth. We want to find a canonically defined sub--Cartier crystal $\omega_{\pi, \GR} \inc \pi_*\omega_X$ satisfying $\chi_{ss}(X, \omega_X) = \chi_{ss}(Y, \omega_{\pi, \GR})$ (when $X$ and $Y$ are proper). The essential ingredient will be our recent $\bQ_p$--Grauert--Riemenschneider vanishing theorem (in short, $\bQ_p$--GR vanishing):

\begin{sthm}[{\cite{Baudin_Witt_GR_vanishing_and_applications}}]\label{Witt_GR_vanishing}
	In our setup above, there exists $e > 0$ such that for all $i > 0$, \[ p^e \cdot R^i\pi_*W\omega_X = 0. \]
\end{sthm}

\noindent Let us now move to the construction and properties of $\omega_{\pi, \GR}$.

\begin{slem}\label{reinterpretation_of_inverse_system}
	There is a natural isomorphism \[ \pi_*W\omega_X \to \Rlim \pi_*W_n\omega_X^{\perf}. \] 
\end{slem}
\begin{proof}
	By \autoref{Witt_omega_coperfect}, there is a natural isomorphism $W\omega_X \to \lim W_n\omega_X^{\perf}$. Applying $\pi_*$ gives that $\pi_*W\omega_X \to \lim \pi_*W_n\omega_X^{\perf}$ is an isomorphism. To conclude the proof, it is then enough to show that the inverse system $\{\pi_*W_n\omega_X^{\perf}\}_{n \geq 1}$ satisfies the Mittag--Leffler condition. This follows from \cite[Remark 2.2.11]{Baudin_Witt_GR_vanishing_and_applications}.
\end{proof}

\begin{snotation}
	Let $\cF^{\bullet}$ be a complex of abelian sheaves. In order to lighten notations, we will write \[ \cF^{\bullet}/p^n \coloneqq \cF^{\bullet} \otimes_{\bZ}^L \bZ/p^n\bZ. \]
\end{snotation}

\begin{srem}\label{rem_derived_p_completion_first}
	Note that $\cF^{\bullet}/p^n$ is the cone of $\cF^{\bullet} \xto{p^n} \cF^{\bullet}$. Since all our functors are additive (hence respect multiplication by $p^n$), they will all preserve derived tensor product by $\bZ/p^n\bZ$.
\end{srem}

\begin{sprop}\label{cohomology_fg_mod_p}
	There exists a canonically defined sub--Cartier crystal $\cM$ of $\pi_*\omega_X$ such that \[ (\pi_*W\omega_X)/p \cong \cM^{\perf}. \]
\end{sprop}

\begin{proof}
	For any $n \geq 1$, let $\cM_n \subseteq \pi_*\omega_X$ be the sub--$W_n$--Cartier module defined by the image of \[ \pi_*W_n\omega_X \to \pi_*\omega_X. \] 
	Since it is a $p$--torsion $W_n$--Cartier module, we know by \cite[Lemma 4.3.8]{Baudin_Duality_between_Witt_Cartier_crystals_and_perverse_sheaves} that there exists a Cartier module $\cM_n'$, unique in the category of Cartier crystals that is equipped with a nil--isomorphism $\cM_n \to \cM_n'$ of $W_n$--Cartier modules. Again by \emph{loc.cit.}, there exists a injection of Cartier crystals $\cM_n' \dra \pi_*\omega_X$, and since we have inclusions \[ \dots \inc \cM_{n + 1} \inc \cM_n \inc \dots \inc \pi_*\omega_X, \] we obtain a descending chain of Cartier crystals \[ \dots \dra \cM_{n + 1}' \dra \cM_n' \dra \dots \dra \pi_*\omega_X. \]	
	
	Since Cartier crystals are Artinian objects (see e.g. \cite[Main theorem]{Blickle_Bockle_Cartier_modules_finiteness_results}), this descending chain must eventually stabilize. Set $\cM \coloneqq \cM_n'$ for $n \gg 0$ (it is only defined as a Cartier crystal!). By construction, we obtain that for $n \gg 0$, the inclusions $\cM_{n + 1} \inc \cM_n$ are nil-isomorphisms, so they induce isomorphisms \[ \cM_{n + 1}^{\perf} \cong \cM_n^{\perf} \] by \autoref{Gabber_finiteness}.\autoref{itm:nil_iso_iff_perf_iso}, and they agree with $\cM^{\perf}$. To finish the proof, we need to show that \[ (\pi_*W\omega_X)/p \cong \cM^{\perf}.\] We have that \[ (\pi_*W\omega_X)/p \expl{\cong}{\autoref{reinterpretation_of_inverse_system}} \left(\Rlim \pi_*W_n\omega_X^{\perf}\right)/p \expl{\cong}{\autoref{rem_derived_p_completion_first}} \Rlim\left((\pi_*W_n\omega_X)^{\perf}/p\right) \expl{\cong}{\autoref{rem_derived_p_completion_first} and \autoref{Gabber_finiteness}} \Rlim \left( (\pi_*W_n\omega_X)/p\right)^{\perf}. \] 
	First of all, we claim that \[ \Rlim \left(\left( (\pi_*W_n\omega_X)/p\right)^{\perf}\right) \cong \Rlim \cH^0\left((\pi_*W_n\omega_X)/p\right)^{\perf}. \] To see this, all we have to do is showing that $\Rlim \left(\cH^{-1}(\pi_*W_n\omega_X/p)^{\perf}\right) = 0$. The fact that $R^1\lim$ vanishes follows from \cite[Remark 2.2.11]{Baudin_Witt_GR_vanishing_and_applications}. Finally, the $\lim$ also vanishes, since \[ \lim \left(\cH^{-1}(\pi_*W_n\omega_X/p)^{\perf}\right) = \cH^{-1}\left(\Rlim \left( (\pi_*W_n\omega_X)/p\right)^{\perf}\right) \expl{\cong}{see above} \cH^{-1}\left((\pi_*W\omega_X)/p\right) = 0, \] where the last vanishing holds since $\pi_*W\omega_X$ is $p$--torsion free (see \cite[Lemma 2.2.9]{Baudin_Witt_GR_vanishing_and_applications}). Hence, our claim is proven.


	Note that by construction, we have a commutative diagram \[ \begin{tikzcd}
		\Rlim \cH^0((\pi_*W_n\omega_X^{\perf})/p) \arrow[rr]                 &  & \Rlim \cM_n^{\perf} = \cM^{\perf} \arrow[d, "\inc"] \\
		(\pi_*W\omega_X)/p \arrow[u, "\cong"] \arrow[rr, hook] &  & \pi_*\omega_X^{\perf},                            
	\end{tikzcd} \] showing that the top arrow is injective. Since it is an inverse limit of surjective maps and there is no $R^1\lim$ in play by \cite[Remark 2.11]{Baudin_Witt_GR_vanishing_and_applications}, we deduce that the top arrow is in fact an isomorphism. All in all, we have shown that $(\pi_*W\omega_X)/p \cong \cM^{\perf}$.
\end{proof}

\begin{sdefn}
	The Cartier crystal $\cM$ in \autoref{cohomology_fg_mod_p} will be called the \emph{weak Grauert--Riemenschneider sheaf} (in short, weak GR sheaf) with respect to $\pi$. It will be denoted $\omega_{\pi, \GR}$.
\end{sdefn}

\begin{srem}\label{rem_we_may_assume_sub_Cartier_mod}
	By its very construction, $\omega_{\pi, \GR}$ is only defined as a sub--Cartier \emph{crystal} of $\pi_*\omega_X$, not as a sub--Cartier module. In practice, this might be a bit cumbersome, so let us explain how to see it as a sub--Cartier \emph{module} of $\pi_*\omega_X$. 
	
	Since we have an injective morphism of Cartier crystals $\omega_{\pi, \GR} \dra \pi_*\omega_X$, we know by \cite[Lemma 3.4.14]{Baudin_Duality_between_perverse_sheaves_and_Cartier_crystals} that we can fit it in a commutative diagram \[ \begin{tikzcd}
		F^e_*\omega_{\pi, \GR} \arrow[d] \arrow[rd, "g"] &               \\
		\omega_{\pi, \GR} \arrow[r, dashed]         & \pi_*\omega_X,
	\end{tikzcd} \] where the left vertical map is the structural morphism of $\omega_{\pi, \GR}$ (hence an isomorphism of crystals), and $g$ is a morphism of Cartier \emph{modules}. Since $g$ is also injective as a morphism of crystals, $\ker(g)$ is nilpotent. Hence, if we set $\ttilde{\omega_{a, \GR}} \coloneqq \im(g) \inc \pi_*\omega_X$, then $\omega_{\pi, \GR} \sim_C \ttilde{\omega_{\pi, \GR}} \inc \pi_*\omega_X$. \\
\end{srem}

Our goal is now to show that $\chi_{ss}(X, \omega_X) = \chi_{ss}(Y, \omega_{\pi, \GR})$ when $X$ and $Y$ are proper. An important intermediate step is to show that each $W(k)$--module $H^i(Y, \pi_*W\omega_X)$ is finitely generated, and the key notion to prove this is derived $p$--completeness. The reason is that a derived $p$--complete complex of $W(k)$--modules which is finitely generated modulo $p$ is in fact finitely generated (\stacksproj{0CQF}).

\begin{sdefn}[{\stacksproj{091Z}}]
	We say that a complex of abelian sheaves $\cF^{\bullet}$ is \emph{derived $p$-complete} if the natural map \[ \cF^{\bullet} \to \Rlim_n \: \cF^{\bullet}/p^n \] is an isomorphism.
\end{sdefn}

\begin{srem}\label{rem_derived_p_completion_second}
	Since derived pushforwards preserve $\Rlim$ and $- \otimes^L_{\bZ} \bZ/p^n\bZ$ (see \autoref{rem_derived_p_completion_first}), they also preserve derived $p$--complete objects.
\end{srem}

\begin{slem}\label{Witt_omega_derived_p_complete}
	The sheaf $W\omega_X$ is derived $p$--complete.
\end{slem}
\begin{proof}
	For any variety $X$, it follows from \autoref{cone_of_p_Witt_omega} and \autoref{Witt_omega_coperfect} that $W\omega_X^{\bullet}$ is derived $p$--complete. Note that $W\omega_X^{\bullet}$ is simply the sheaf $W\omega_X$ shifted in some degree in our case (this holds for each $W_n\omega_X^{\bullet}$ and $W_n\omega_X$ by the proof of \cite[Lemma 2.2.15]{Baudin_Witt_GR_vanishing_and_applications}, so apply Lemma 2.2.10 in \emph{loc.cit.}). The proof is then complete.
\end{proof}

\begin{slem}\label{pushforward_p_complete}
	The sheaf $\pi_*W\omega_X$ is derived $p$-complete. In particular, $\RGamma(Y, \pi_*W\omega_X)$ is also derived $p$-complete.
\end{slem}

\begin{proof}
	By \autoref{Witt_omega_derived_p_complete}, we know that $W\omega_X$ is derived $p$--complete. In particular, $R\pi_*W\omega_X$ is also derived $p$-complete by \autoref{rem_derived_p_completion_second}, so the canonical map \[ \begin{tikzcd}
		\pi_*W\omega_X \arrow[rr, "\theta"] &  & \cH^0\left(\Rlim R\pi_*W\omega_X/p^n\right)
	\end{tikzcd} \] is an isomorphism. For all $n \geq 1$, there is an exact triangle \[  \begin{tikzcd}
		(\pi_*W\omega_X)/p^n \arrow[rr] &  & (R\pi_*W\omega_X)/p^n \arrow[rr] 	&  & (\tau_{\geq 1}R\pi_*W\omega_X)/p^n \arrow[rr, "+1"] &  & {}
	\end{tikzcd} \] so applying $\Rlim$ gives an exact triangle \\
	\begin{equation}\label{triangle_p_compl}
		\Rlim \left(\left(\pi_*W\omega_X\right)/p^n\right) \xto{\psi} \Rlim \left(\left(R\pi_*W\omega_X\right)/p^n\right) \to \Rlim \left(\left(\tau_{\geq 1}R\pi_*W\omega_X\right)/p^n\right) \xto{+1} 
	\end{equation}
	Since $\pi_*W\omega_X$ is $p$--torsion free (\cite[Lemma 2.2.9]{Baudin_Witt_GR_vanishing_and_applications}), the complex $(\pi_*W\omega_X)/p^n$ is actually a sheaf in degree zero (and it agrees with the usual non--derived quotient). In particular, each map of sheaves $(\pi_*W\omega_X)/p^{n + 1} \to (\pi_*W\omega_X)/p^n$ is surjective, so $\lim \: (\pi_*W\omega_X)/p^n = \Rlim \: (\pi_*W\omega_X)/p^n$.
	
	By construction, we have a commutative diagram \[ \begin{tikzcd}
		\pi_*W\omega_X \arrow[rr, "\theta"] \arrow[rd] &                                        & \cH^0\left(\Rlim \: (R\pi_*W\omega_X)/p^n\right), \\
		& \lim \: (\pi_*W\omega_X)/p^n \arrow[ru, "\cH^0(\psi)"'] &                                    
	\end{tikzcd} \] so in order to show that $\pi_*W\omega_X$ is derived $p$-complete, it suffices to show that $\cH^0(\psi)$ is an isomorphism (see the previous paragraph).
	Note that there is a commutative diagram \[ \begin{tikzcd}
		{(R^1\pi_*W\omega_X)[p^{n + 1}]} \arrow[rr, "\cdot p"]                                              &  & {(R^1\pi_*W\omega_X)[p^n]}                                                         \\
		\cH^0\left(\left(\tau_{\geq 1}R\pi_*W\omega_X\right)/p^{n + 1}\right) \arrow[rr] \arrow[u, "\cong"] &  & \cH^0\left(\left(\tau_{\geq 1}R\pi_*W\omega_X\right)/p^n\right) \arrow[u, "\cong"]
	\end{tikzcd} \] (this is not specific to $R\pi_*W\omega_X$, this fact holds for any complex of abelian sheaves). Since there exists $e > 0$ such that $p^e \cdot R^1\pi_*W\omega_X = 0$ by $\bQ_p$--GR vanishing (\autoref{Witt_GR_vanishing}), we deduce that \[ \cH^0\left(\Rlim\left(\left(\tau_{\geq 1}R\pi_*W\omega_X\right)/p^n\right)\right) = \lim\cH^0\left(\left(\tau_{\geq 1}R\pi_*W\omega_X\right)/p^n\right) = \lim \: R^1\pi_*W\omega_X[p^n] = 0. \] By the triangle \autoref{triangle_p_compl}, we deduce that $\cH^0(\psi)$ is an isomorphism.
	
	Thus, we have proven that $\pi_*W\omega_X$ is derived $p$--complete. The fact that $\RGamma(Y, \pi_*W\omega_X)$ is also derived $p$--complete follows from \autoref{rem_derived_p_completion_second}.
\end{proof}

\begin{srem}
	It seems likely to the author that $\bQ_p$--GR vanishing was not necessary in the proof above. It could be that in general, the $p^{\infty}$--torsion of higher pushforwards of $W\omega_X$ under any proper map is annihilated by some fixed $p$--power. Nevertheless, we will need the full $\bQ_p$--GR vanishing later.
\end{srem}

\begin{scor}\label{finiteness_of_cohomology}
	Assume that $X$ and $Y$ are proper.  Then for all $i \geq 0$, the $W(k)$--module $H^i(Y, \pi_*W\omega_X)$ is finitely generated, and there is an isomorphism \[ H^i\left(Y, (\pi_*W\omega_X)/p\right) \cong H^i_{ss}(Y, \omega_{\pi, \GR}). \]
\end{scor}
\begin{proof}
	We have \[ H^i\left(Y, (\pi_*W\omega_X)/p\right) \expl{\cong}{by definition} H^i(Y, \omega_{\pi, \GR}^{\perf}) \expl{\cong}{\autoref{rem_ss_cohomology}} H^i_{ss}(Y, \omega_{\pi, \GR}), \] and we know that this latter $k$--vector space is finitely generated by properness. In particular, \[ \RGamma(Y, \pi_*W\omega_X)/p \expl{\cong}{\autoref{rem_derived_p_completion_first}} \RGamma(Y, (\pi_*W\omega_X)/p) \in D_{\mathrm{fg}}(k), \] so we conclude by \stacksproj{0CQF} that \[ \Rlim \RGamma(Y, \pi_*W\omega_X)/p^n \in D_{\mathrm{fg}}(W(k)). \] By \autoref{pushforward_p_complete}, this complex above is exactly $\RGamma(Y, \pi_*W\omega_X)$.
\end{proof}

\begin{notation_sec}
	Let $L$ be a field, and let $V^{\bullet} \in D^b_{\mathrm{fg}}(L)$. We set \[ \chi_L(V^{\bullet}) \coloneqq \sum_{i \in \bZ} (-1)^i\dim_L(\cH^i(V^{\bullet})). \]
\end{notation_sec}

Now comes the trick that allows us to compare the semistable Euler characteristics of $\omega_X$ and $\omega_{\pi, \GR}$.
\begin{slem}\label{Linus_paper_trick}
	Let $M^{\bullet} \in D^b_{\mathrm{fg}}(W(k))$. Then \[ \chi_K(M^{\bullet} \otimes^L_{W(k)} K) = \chi_k(M^{\bullet}/p). \]
\end{slem}
\begin{proof}
	This is standard, see e.g. the proof of \cite[Lemma 2.20]{Baudin_Patakfalvi_Rosler_Zdanowicz_On_Gorenstein_Q_p_rational_threefolds_and_fourfolds}.
\end{proof}

\begin{scor}\label{weak_GR_sheaf}
	Assume that $X$ and $Y$ are proper. Then \[ \chi_{ss}(Y, \omega_{\pi, \GR}) = \chi_{ss}(X, \omega_X). \]
\end{scor}
\begin{proof}
	We have that \begin{align*}
		\chi_{ss}(Y, \omega_{\pi, \GR}) & \expl{=}{\autoref{rem_ss_cohomology}} \chi(Y, (\pi_*W\omega_X)/p) = \chi_k\left(\RGamma(Y, (\pi_*W\omega_X)/p)\right) = \chi_k\left(\RGamma(Y, \pi_*W\omega_X)/p\right) \\
		& \expl{=}{\autoref{finiteness_of_cohomology} and \autoref{Linus_paper_trick}} \chi_K\left(\RGamma(Y, \pi_*W\omega_X) \otimes^L_{W(k)} K\right) \expl{=}{\autoref{Witt_GR_vanishing}} \chi_K\left(\RGamma(X, W\omega_X) \otimes^L_{W(k)} K\right) \\
		& \expl{=}{\autoref{Linus_paper_trick}} \chi_k\left(\RGamma(X, W\omega_X)/p\right)
		\expl{=}{\autoref{cone_of_p_Witt_omega}} \chi_k\left(\RGamma(X, \omega_X^{\perf})\right) \expl{=}{\autoref{rem_ss_cohomology}} \chi_{ss}(X, \omega_X). \qedhere
	\end{align*} 
\end{proof}


In fact, there is the following generalization which follows from our techniques. We will use it in the proof of \autoref{euler_char_zero_implies_alb_fibered_by_ab_vars}:

\begin{scor}\label{weak_GR_sheaf_on_steroids}
	Assume that $X$ and $Y$ are proper. Let $L \in \Pic(Y)$ be a line bundle satisfying the following properties: for all $i \geq 0$, the vector spaces \[ H^i(X, \omega_X^{\perf} \otimes \pi^*L) \] and \[ H^i(Y, \omega_{\pi, \GR}^{\perf} \otimes L) \] are finitely generated. Then \[ \chi(Y, \omega_{\pi, \GR}^{\perf} \otimes L) = \chi(X, \omega_X^{\perf} \otimes \pi^*L). \]
\end{scor}
\begin{proof}
	Let $\underline{L}$ denote the Teichmüller lift of $L$ (it is then a line bundle over the ringed space $(Y, W\cO_Y)$). Note that both $W\omega_X \otimes_{W\cO_X} \pi^*\underline{L}$ are $\pi_*W\omega_X \otimes_{W\cO_Y} \underline{L}$ are derived $p$-complete (being derived $p$-complete is a local statement, so this reduces to \autoref{pushforward_p_complete}). In particular $\RGamma(X, W\omega_X \otimes_{W\cO_X} \pi^*\underline{L})$ and $\RGamma(Y, \pi_*W\omega_Y \otimes_{W\cO_Y} \underline{L})$ are also derived $p$--complete.
	
	Note that twisting the triangle of \autoref{cone_of_p_Witt_omega} by $\pi^*\underline{L}$ shows that $(W\omega_X \otimes_{W\cO_X} \pi^*\underline{L})/p \cong \omega_X^{\perf} \otimes_{\cO_X} \pi^*L$, and similarly $(\pi_*W\omega_X \otimes_{W\cO_Y} \underline{L})/p \cong \omega_{\pi, \GR}^{\perf} \otimes_{\cO_Y} L$. Hence, the assumption and the same argument as in \autoref{finiteness_of_cohomology} imply that each $W(k)$--module $H^i(X, W\omega_X \otimes \pi^*\underline{L})$ is finitely generated, and similarly for $H^i(Y, \pi_*W\omega_X \otimes_{W\cO_Y} \underline{L})$.
	
	Finally, note that for all $i > 0$, the sheaves $R^i\pi_*(W\omega_X \otimes_{W\cO_X} \pi^*\underline{L}) \cong R^i\pi_*W\omega_X \otimes_{W\cO_Y} \underline{L} = 0$ are $p$--power torsion by \autoref{Witt_GR_vanishing} (we use the projection formula for the morphism of ringed spaces $\pi \colon (X, W\cO_X) \to (Y, W\cO_Y)$, see \stacksproj{01E8}). Thus, we can apply the exact same proof as in \autoref{weak_GR_sheaf}. 
\end{proof}

If $L$ is ample or anti-ample, there seems to be no chance that the hypotheses of \autoref{weak_GR_sheaf_on_steroids} hold in general. Nevertheless, it holds in the intermediate situation: 

\begin{slem}\label{cohom_perfection_twist_fg}
	Let $\cM$ be a Cartier module on a proper variety $X$. Then for all $L \in \Pic^{\tau}(X)$ and $i \geq 0$, the $k$--vector spaces $H^i(X, \cM^{\perf} \otimes L)$ are finitely generated.
\end{slem}
\begin{proof}
	Since line bundles in  $\Pic^{\tau}(X)$ form a bounded family, the quantity \[ \sup_{L \in \Pic^{\tau}(X)}\{ h^i(X, \cM \otimes L) \} \] is finite. Hence, the lemma is immediate from the projection formula, and the standard fact that if $\{V_n\}_{n \geq 1}$ is an inverse system of $k$--vector spaces such that $\dim(V_n)$ is bounded independently on $n$, then $\lim V_n$ is also finite--dimensional.
\end{proof}

\section{Euler characteristic of Cartier modules on abelian varieties}\label{section_Euler_char_V_modules}

Our goal is to show that on an ordinary abelian variety $A$ (see \autoref{def_ordinarity_AV}), any Cartier module has non-negative semistable Euler characteristic. Since Cartier modules are ``GV-sheaves up to nilpotence'' (see \cite{Hacon_Pat_GV_Characterization_Ordinary_AV, Baudin_Positive_characteristic_generic_vanishing_theory}), a first idea would be mimic the proof that GV--sheaves have non-negative Euler characteristic.

Unfortunately, working with semistable Euler characteristic (which we must if we want to use generic vanishing techniques) makes the situation subtler: it is not always true that Cartier modules on \emph{any} abelian variety have non-negative semistable Euler characteristic. For example, if $E$ is a supersingular elliptic curve, then $\chi_{ss}(E, \omega_E) = -1$.

The fundamental issue is that this semistable Euler characteristic is \emph{not} invariant under twists as above, see \cite[Remark 3.4.3]{Baudin_Positive_characteristic_generic_vanishing_theory}.

\subsection{$V$-modules and generic vanishing}

\begin{defn}[{\cite[Section 5]{Hacon_Pat_GV_Geom_Theta_Divs}}]\label{def:V-module}
	The \emph{relative Verschiebung} $V_{A/k} \colon \bighat{A}^{(p)} = \bighat{A^{(p)}} \to \bighat{A}$ is by definition the dual isogeny of the relative Frobenius $F_{A/k} \colon A \to A^{(p)}$, see \stacksproj{0CC9}.
	
	Since $k$ is perfect, we can canonically identify $\bighat{A}^{(p)}$ and $\bighat{A}$ as schemes. The induced morphism $V \colon \bighat{A} \to \bighat{A}$ is called the \emph{absolute Verschiebung}. By definition, the diagram
	\[ \begin{tikzcd}
		\bighat{A} \arrow[rr, "V"] \arrow[d] &  & \bighat{A} \arrow[d] \\
		\Spec k \arrow[rr, "F^{-1}"]         &  & \Spec k             
	\end{tikzcd} \] commutes.
	\begin{itemize}
		\item A \emph{$V$-module} on $\bighat{A}$ is a pair $(\cN, \theta)$ where $\cN$ is a coherent sheaf on $\bighat{A}$ with a morphism $\theta \colon \cN \to V^*\cN$. A morphism of $V$-modules is defined as for Cartier modules: it is a morphism of underlying sheaves commuting with the $V$-module structures. The category of $V^s$-modules on $\bighat{A}$ is denoted $\Coh_{\bighat{A}}^{V}$.
		\item A $V$-module $(\cN, \theta)$ is nilpotent if $\theta^N = 0$ for some $N \geq 0$ (we abuse notations as in \autoref{rem:def_Cartier_mod}.\autoref{itm:abuse_iterates_structure_map}). Nilpotent modules form a Serre subcategory, whose quotient is the category of \emph{$V$-crystals}, denoted $\Crys_{\bighat{A}}^{V}$. If $\cN_1$ and $\cN_2$ are $V$-modules which are isomorphic as crystals, we shall write $\cN_1 \sim_V \cN_2$. 
		\item A $V$-module $(\cN, \theta)$ is said to be \emph{injective} if its structural map $\theta$ is injective.
	\end{itemize}
\end{defn}
\begin{rem}\label{canonical_injective_V_module}
	\begin{itemize}
		\item For any $V$-module $\cN$, there exists a canonical injective $V$-module $\cN_{\injj}$ with a surjective map $\cN \surj \cN_{\injj}$, inducing an isomorphism of crystals. Explicitly, $\cN_{\injj}$ is given by the image of $\cN \to \colim V^{es, *}\cN$.
		
		Note that the induced morphism $\cN \to \cN_{\injj}$ is indeed an isomorphism of $V$-crystals, since it is surjective and by construction, $\ker(f)$ is a nilpotent $V$-module.
	\end{itemize}
\end{rem}

\begin{notation}\label{not_euler_char_V_mods}
	Let $\cN$ be a $V$-module. We define \[ \chi_{ss}(\cN) \coloneqq \sum_i (-1)^i \dim_k\left(\colim \Tor_i(V^{e, *}\cN, k(0))\right). \]
\end{notation}

\begin{rem}
	By resolving $\cN$ by a complex of free modules at $\cO_{\bighat{A}, 0}$, we see that the $k$--vector spaces $\Tor_i(V^{e, *}\cN, k(0))$ have bounded dimension, independently of $e$. In particular, their colimit is also finite--dimensional (this is a general result about systems of vector spaces, which we let the reader verify).
\end{rem}

\begin{defn}\label{def_ordinarity_AV}
	An abelian variety is said to be \emph{ordinary} if the natural morphism $H^1(A, \cO_A) \to H^1(A, F_*\cO_A)$ induced by the Frobenius is an isomorphism.
%
\end{defn}


\begin{prop}\label{equivalence_ordinary}
	An abelian variety $A$ is ordinary if and only if the absolute Verschiebung $V$ is étale.
\end{prop}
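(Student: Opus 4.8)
The plan is to translate the cohomological definition of ordinarity into the bijectivity of the differential of the Verschiebung at the origin, and then to observe that an isogeny is étale precisely when this differential is an isomorphism.

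First I would unwind \autoref{def_ordinarity_AV}. Since $F \colon A \to A$ is finite (hence affine), one has $H^1(A, F_*\cO_A) = H^1(A, \cO_A)$, and under this identification the map in the definition is exactly the (semilinear) Frobenius action $F^* \colon H^1(A, \cO_A) \to H^1(A, \cO_A)$. Thus $A$ is ordinary if and only if $F^*$ is bijective. I would then use the canonical, functorial identification $H^1(A, \cO_A) = \Lie(\hat{A}) = T_0\hat{A}$, so that pullback of degree-zero line bundles corresponds to pullback on $H^1(-, \cO)$.

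Next I would factor the absolute Frobenius as $F_A = w \circ F_{A/k}$, where $F_{A/k} \colon A \to A^{(p)}$ is the relative Frobenius and $w \colon A^{(p)} \to A$ is the projection covering $F$ on $\Spec k$. On cohomology this gives $F^* = F_{A/k}^* \circ w^*$, and $w^*$ is always a bijection (it is the base-change isomorphism $H^1(A,\cO_A)\otimes_{k,F}k \cong H^1(A^{(p)}, \cO_{A^{(p)}})$ coming from perfectness of $k$). Hence $F^*$ is bijective if and only if $F_{A/k}^* \colon H^1(A^{(p)}, \cO_{A^{(p)}}) \to H^1(A, \cO_A)$ is. The crux is then to identify $F_{A/k}^*$ with the differential of $V_{A/k}$: since $V_{A/k} = (F_{A/k})^\vee$ is by definition the map induced on $\Pic^0$ by pullback along $F_{A/k}$, its differential at the origin, read through $\Lie(\hat{A}) = H^1(A, \cO_A)$, is exactly $F_{A/k}^*$ on $H^1(\cO)$. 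Therefore $A$ is ordinary iff $d(V_{A/k})_0$ is an isomorphism. Finally, $V_{A/k}$ is an isogeny, so it is étale iff its kernel is étale iff $T_0\ker(V_{A/k}) = \ker\big(d(V_{A/k})_0\big) = 0$, which by equality of dimensions is iff $d(V_{A/k})_0$ is an isomorphism; and since the identification $\hat{A}^{(p)} \cong \hat{A}$ used to pass to the absolute $V$ is induced by the isomorphism $F^{-1}$ on $\Spec k$, étaleness of $V$ is equivalent to that of $V_{A/k}$. Chaining these equivalences closes the argument.

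The main obstacle I expect is the bookkeeping around the Frobenius twist and the semilinearity: one must verify carefully that the differential of the dual isogeny $(F_{A/k})^\vee$ genuinely is the Frobenius-pullback on $H^1(\cO)$ (via functoriality of $\Lie \circ \Pic^0$), and that the two Frobenius actions in play — the \emph{absolute} one entering the definition of ordinarity and the \emph{relative} one entering $V_{A/k}$ — differ only by the isomorphism $w^*$. The remaining points (finiteness of $V_{A/k}$, the dimension count identifying $\ker(d(V_{A/k})_0) = 0$ with étaleness of the kernel, and preservation of étaleness under $\hat{A}^{(p)} \cong \hat{A}$) are routine.
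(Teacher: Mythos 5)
Your proof is correct. Note that the paper does not actually prove this proposition: its ``proof'' is a citation (``this is classical, see \cite[Proposition 2.3.2]{Hacon_Pat_GV_Characterization_Ordinary_AV} and references therein''), so your argument is a self-contained reconstruction of the classical fact rather than a parallel to anything in the text. Each step checks out: the identification of the defining map $H^1(A,\cO_A)\to H^1(A,F_*\cO_A)$ with the $p$-linear Frobenius action (using that $F$ is affine, indeed the identity on the underlying space); the factorization $F^*=F_{A/k}^*\circ w^*$ with $w^*$ bijective because $k$ is perfect; the functorial identification $\Lie(\hat{A})=H^1(A,\cO_A)$ under which $\Lie$ of the dual homomorphism $\widehat{F_{A/k}}=V_{A/k}$ is precisely $F_{A/k}^*$ (with the correct variance, $H^1(A^{(p)},\cO_{A^{(p)}})\to H^1(A,\cO_A)$); the criterion that an isogeny of abelian varieties is \'etale if and only if its differential at the origin is an isomorphism (by translation-homogeneity and equality of dimensions, or equivalently $\Lie(\ker V_{A/k})=0$); and the harmless passage from $V_{A/k}$ to the absolute $V$ through the canonical isomorphism $\hat{A}^{(p)}\cong\hat{A}$, which only covers $F^{-1}$ on $\Spec k$ but preserves \'etaleness since that is an absolute property stable under composition with isomorphisms of schemes. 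You also correctly flag the one genuine subtlety, namely that all identifications are only $p$-semilinear over $k$, which affects none of the bijectivity statements. What your route buys is a proof readable without chasing references; what the paper's citation buys is brevity, and in addition the cited result of Hacon--Patakfalvi packages several further equivalent characterizations (e.g.\ via the $p$-rank) that the paper does not need here.
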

\begin{proof}
	This is classical, see e.g. \cite[Proposition 2.3.2]{Hacon_Pat_GV_Characterization_Ordinary_AV} and references therein.
\end{proof}

The idea is to rephrase our sought statement on Cartier modules by a statement on $V$--modules, which is much easier to handle. The key to do this is to use the (symmetric) \emph{Fourier--Mukai transform} on Cartier modules.

\begin{defn}[{\cite{Mukai_Fourier_Mukai_transform, Schnell_Fourier_Mukai_transform_made_easy}}]
	\begin{itemize}
		\item The \emph{Fourier-Mukai transforms} are the functors $R\bighat{S} \colon D_{\coh}(A) \to D_{\coh}(\bighat{A})$ and $RS \colon D_{\coh}(\bighat{A}) \to D_{\coh}(A)$ defined by 
		\[ R\hat{S}(\cM) \coloneqq Rp_{\hat{A}, *}(Lp_A^*\cM \otimes \cP)  \esp \mbox{ and } \esp RS(\cN) \coloneqq Rp_{A, *}(Lp_{\bighat{A}}^*\cN \otimes \cP), \] where $p_A \colon A \times \bighat{A} \to A$ and $p_{\bighat{A}} \colon A \times \bighat{A} \to \bighat{A}$ denote the projections.
		\item The \emph{symmetric Fourier-Mukai transforms} are the functors $\FM_A \coloneqq R\bighat{S} \circ D_A$ and $\FM_{\bighat{A}} \coloneqq RS \circ D_{\bighat{A}}$, where $D_A \coloneqq \cR\HHom(-, \omega_A[g])$ (and similarly for $D_{\bighat{A}}$).
	\end{itemize}
\end{defn}

Although the precise definition of the (symmetric) Fourier-Mukai transforms may look technical, we will never use it explicitly in this paper. We will rather use some of its well--known properties, that we recall now.

\begin{thm}\label{properties_Fourier_Mukai_transform}
	\begin{enumerate}
		\item\label{itm:equiv_cat} The functors $\FM_A$ and $\FM_{\bighat{A}}$ are inverses of each other, and hence equivalences of categories.
		\item\label{itm:behaviour_pushforwards_and_pullbacks} Let $f \colon A \to B$ be a morphism of abelian varieties, and let $\bighat{f} \colon \bighat{B} \to \bighat{A}$ denote the dual morphism. Then we have \[ \FM_B \: \circ \: Rf_* = L\bighat{f}^* \circ \FM_A. \]
		Here are two special cases:
		\begin{itemize}
			\item if $\cM$ is a coherent sheaf on $A$, then for all $i \geq 0$, \[ H^i(A, \cM)^{\vee} \cong \Tor_i(\FM_A(\cM), k(0)); \]
			\item if $\cM$ is a Cartier module, then each $\cH^i(\FM_A(\cM))$ has an induced $V$-module structure.
		\end{itemize}
		\item\label{support_H0} If $\cM$ is a coherent sheaf, then \[ \Supp \cH^0(\FM_A(\cM)) = \bigset{\alpha \in \bighat{A}}{ H^0(A, \cM \otimes \cP_{\alpha}) \neq 0}. \]
	\end{enumerate}
\end{thm}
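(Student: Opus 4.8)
The strategy is to treat part (1) and the general identity of part (2) as the classical Mukai--Schnell package, which I would cite directly, and to \emph{derive} the two displayed special cases and part (3) from them. For part (1), the starting point is Mukai's inversion theorem $RS \circ R\bighat{S} \cong (-1_A)^*[-g]$ together with its symmetric counterpart, where $g = \dim A$. The symmetric transforms $\FM_A = R\bighat{S} \circ D_A$ and $\FM_{\bighat{A}} = RS \circ D_{\bighat{A}}$ are arranged so that Grothendieck--Serre duality absorbs both the sign $(-1)^*$ and the shift $[-g]$, turning Mukai's quasi-inverses into genuine mutually inverse (contravariant) equivalences; I would cite \cite{Mukai_Fourier_Mukai_transform, Schnell_Fourier_Mukai_transform_made_easy}. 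For the general formula of part (2), the one geometric input is the compatibility of the Poincaré kernel with the dual morphism, namely $(f \times \id)^*\cP_B \cong (\id \times \bighat{f})^*\cP_A$ on $A \times \bighat{B}$; feeding this into flat base change and the projection formula along the two projections produces $\FM_B \circ Rf_* = L\bighat{f}^* \circ \FM_A$ (again \cite{Mukai_Fourier_Mukai_transform}).

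I would then read off the two special cases. Applying part (2) to the structure morphism $f \colon A \to \Spec k$ of the trivial abelian variety $B = \Spec k$, the dual $\bighat{f}$ is the inclusion of the origin $0 \inc \bighat{A}$, so that $Rf_* = \RGamma(A, -)$, $L\bighat{f}^* = - \otimes^L_{\cO_{\bighat{A}}} k(0)$, while $\FM_{\Spec k}$ is the $k$-linear dual $(-)^\vee$ (since $D_{\Spec k} = (-)^\vee$ and $R\bighat{S}$ over a point is the identity). Hence $\RGamma(A, \cM)^\vee \cong \FM_A(\cM) \otimes^L k(0)$, and taking cohomology gives $H^i(A, \cM)^\vee \cong \Tor_i(\FM_A(\cM), k(0))$. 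For the second special case, I would apply part (2) to the relative Frobenius $F_{A/k} \colon A \to A^{(p)}$, whose dual isogeny is the relative Verschiebung $V_{A/k}$; using perfectness of $k$ to identify the twists $A^{(p)} \cong A$ and $\bighat{A}^{(p)} \cong \bighat{A}$ and so pass to the absolute Verschiebung $V$, this yields a natural isomorphism $\FM_A(F_*\cM) \cong V^*\FM_A(\cM)$. Since $\FM_A$ is contravariant, applying it to the structural map $\theta \colon F_*\cM \to \cM$ reverses the arrow and produces $\FM_A(\cM) \to \FM_A(F_*\cM) \cong V^*\FM_A(\cM)$; passing to $\cH^i$ and using that $V$ is finite flat (so $V^*$ is exact) equips each $\cH^i(\FM_A(\cM))$ with a morphism $\cH^i(\FM_A(\cM)) \to V^*\cH^i(\FM_A(\cM))$, which is precisely the $V$-module structure of \autoref{def:V-module}. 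This is the construction of \cite[Section 5]{Hacon_Pat_GV_Geom_Theta_Divs}.

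Finally, I would deduce part (3) from the first special case. Since tensoring $\cM$ by $\cP_\alpha$ corresponds under $\FM_A$ to translation by $\alpha$ on $\bighat{A}$, the first special case upgrades to $\Tor_i(\FM_A(\cM), k(\alpha)) \cong H^i(A, \cM \otimes \cP_\alpha)^\vee$ for every closed point $\alpha \in \bighat{A}$. As $H^i(A, \cM \otimes \cP_\alpha) = 0$ for $i < 0$, the complex $\FM_A(\cM) \otimes^L k(\alpha)$ lies in $D^{\leq 0}$ for every $\alpha$, whence $\FM_A(\cM) \in D^{\leq 0}$ by the fiberwise criterion; thus $\cH^0$ is its top cohomology sheaf and right-exactness of $- \otimes k(\alpha)$ gives $\Tor_0(\FM_A(\cM), k(\alpha)) \cong \cH^0(\FM_A(\cM)) \otimes k(\alpha)$. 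Combining this with the identity $\Tor_0 \cong H^0(A, \cM \otimes \cP_\alpha)^\vee$ and Nakayama's lemma yields $\alpha \in \Supp \cH^0(\FM_A(\cM))$ if and only if $H^0(A, \cM \otimes \cP_\alpha) \neq 0$. The genuinely delicate points are all convention-dependent: tracking the Frobenius twists and verifying (via the contravariance of $\FM_A$) that the induced structural map points in the direction $\cN \to V^*\cN$ rather than its opposite, and pinning down the $D^{\leq 0}$-concentration used in part (3). Since each of these is standard, the main task is careful citation and sign-checking rather than any new argument.
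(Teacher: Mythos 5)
Your proposal is correct in substance, but it does genuinely more work than the paper, whose entire proof is a citation: parts (1) and (2) are quoted from \cite[Theorems 3.2 and 5.1]{Schnell_Fourier_Mukai_transform_made_easy}, and part (3) from \cite[Theorem 2.2.8.(g)]{Baudin_Positive_characteristic_generic_vanishing_theory}. Your reconstructions are precisely the arguments hiding inside those citations: the first special case of (2) is the exchange formula applied to the structure map $A \to \Spec k$ (with $\bighat{f}$ the origin inclusion and $\FM_{\Spec k} = (-)^{\vee}$); the second is the relative-Frobenius case combined with the perfect-base-field identification of the twists, which is exactly the construction of \cite[Section 5]{Hacon_Pat_GV_Geom_Theta_Divs}; and your proof of (3) via translation-compatibility, the fiberwise $D^{\leq 0}$ criterion and Nakayama is sound (the criterion applies because $\FM_A(\cM)$ is bounded with coherent cohomology and closed points are dense in any closed subset of $\bighat{A}$). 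What your route buys is self-containedness; what it costs is exactly the convention bookkeeping that you defer to the end.

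That deferred sign check is the one place I would press you, because it is not vacuous. With the paper's literal conventions ($\FM_A = R\bighat{S} \circ D_A$, $\cP$ normalized, $\cP_\alpha = \cP|_{A \times \alpha}$), one has $D_A(\cM \otimes \cP_\alpha) \cong D_A(\cM) \otimes \cP_{-\alpha}$ and $R\bighat{S}(- \otimes \cP_\beta) \cong t_\beta^* R\bighat{S}(-)$, hence $\FM_A(\cM \otimes \cP_\alpha) \cong t_{-\alpha}^*\FM_A(\cM)$ and $\Tor_i(\FM_A(\cM), k(\alpha)) \cong H^i(A, \cM \otimes \cP_{-\alpha})^{\vee}$; your argument then outputs $\Supp \cH^0(\FM_A(\cM)) = \{ \alpha \in \bighat{A} : H^0(A, \cM \otimes \cP_{-\alpha}) \neq 0 \}$, which agrees with the stated set only after applying the involution $(-1)_{\bighat{A}}$. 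The test case $\cM = \cP_\beta$ makes this concrete: from $R\bighat{S}(\cO_A) \cong k(0)[-g]$ one gets $\FM_A(\cP_\beta) \cong k(\beta)$, whereas $H^0(A, \cP_\beta \otimes \cP_\alpha) \neq 0$ only at $\alpha = -\beta$. So either the cited source normalizes $\cP$ (or $\FM$) differently, or (3) holds with $\cP_{-\alpha}$ in place of $\cP_\alpha$; run this one-line test against whatever conventions you adopt before relying on your derivation. Reassuringly, the discrepancy is harmless for everything done in this paper, since every application of (3) only asks whether the support is all of $\bighat{A}$, or a proper (or finite) subset, and these properties are invariant under $(-1)^*$.
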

\begin{proof}
	The statements \autoref{itm:equiv_cat} and \autoref{itm:behaviour_pushforwards_and_pullbacks} follow respectively from \cite[Theorem 3.2 and 5.1]{Schnell_Fourier_Mukai_transform_made_easy}, and \autoref{support_H0} is \cite[Theorem 2.2.8.(g)]{Baudin_Positive_characteristic_generic_vanishing_theory}.
\end{proof}

One of the main statements in positive characteristic generic vanishing theory says that the Fourier--Mukai transform induces an equivalence of categories between Cartier crystals on $A$ and $V$--crystals on $\bighat{A}$, see \cite[Theorem 3.2.1]{Baudin_Positive_characteristic_generic_vanishing_theory}. A relevant cohomological consequence of this general fact to us is the following statement:

%
%
%
\begin{lem}\label{SCSL for H^i = SCSL for Tor^i}
	Let $\cM$ be a Cartier module on $A$, and consider the $V$--module $\cN \coloneqq \cH^0(\FM_A(\cN))$. Then for all $i \geq 0$, \[ \varprojlim H^i(A, F^{e}_*\cM)  \cong \left(\colim \Tor_i(V^{e, *}\cN, k(0))\right)^{\vee}. \] In particular, \[ \chi_{ss}(A, \cM) = \chi_{ss}(\cN). \]
\end{lem}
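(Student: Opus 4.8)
The plan is to push the inverse system $\{H^i(A, F^e_*\cM)\}_{e}$ through the Fourier--Mukai transform, where Frobenius pushforward becomes Verschiebung pullback, and then to discard the higher Fourier--Mukai cohomology using generic vanishing. Write $\cN' \coloneqq \FM_A(\cM) \in D_{\coh}(\bighat{A})$, so that $\cN = \cH^0(\cN')$. First I would identify the left-hand side with semistable cohomology. Since $A$ is proper, each $H^i(A, F^e_*\cM)$ is a finite-dimensional $k$-vector space, so the inverse system $\{H^i(A, F^e_*\cM)\}_e$ (with transition maps induced by $F^e_*\theta$) has images of non-increasing dimension, hence satisfies Mittag--Leffler and has $R^1\varprojlim = 0$. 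Combining $\cM^{\perf} = \Rlim_e F^e_*\cM$ (\autoref{Gabber_finiteness}.\autoref{itm:Gabber_fin}) with the Milnor sequence for $\RGamma(A, -)$ then gives $\varprojlim_e H^i(A, F^e_*\cM) \cong H^i(A, \cM^{\perf}) = H^i_{ss}(A, \cM)$ (\autoref{rem_ss_cohomology}); in particular the asserted isomorphism computes $H^i_{ss}(A, \cM)$ and $h^i_{ss}(A, \cM) = \dim_k \varprojlim_e H^i(A, F^e_*\cM)$.

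Next I would transport the system to $\bighat{A}$. For each fixed $e$, the sheaf $F^e_*\cM$ is coherent (as $A$ is $F$-finite), so the first special case of \autoref{properties_Fourier_Mukai_transform}.\autoref{itm:behaviour_pushforwards_and_pullbacks} gives $H^i(A, F^e_*\cM)^\vee \cong \Tor_i(\FM_A(F^e_*\cM), k(0))$. Functoriality of the symmetric Fourier--Mukai transform along the relative Frobenius, whose dual isogeny is the relative Verschiebung (\autoref{def:V-module}), yields $\FM_A(F^e_*\cM) \cong V^{e,*}\cN'$ after the canonical identification $\bighat{A}^{(p)} \cong \bighat{A}$, and the structure map $\theta$ transforms into the maps of the direct system $\{V^{e,*}\cN'\}_e$. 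Consequently the inverse-system transitions $H^i(A, F^{e+1}_*\cM) \to H^i(A, F^e_*\cM)$ dualize to the direct-system transitions $\Tor_i(V^{e,*}\cN', k(0)) \to \Tor_i(V^{e+1,*}\cN', k(0))$. Dualizing and passing to the limit (via $\varprojlim_e W_e^\vee \cong (\colim_e W_e)^\vee$) produces
\[ \varprojlim_e H^i(A, F^e_*\cM) \cong \bigl(\colim_e \Tor_i(V^{e,*}\cN', k(0))\bigr)^\vee. \]

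It then remains to replace the complex $\cN' = \FM_A(\cM)$ by its zeroth cohomology $\cN = \cH^0(\cN')$, and this is the step where generic vanishing is essential, and which I expect to be the main obstacle. The equivalence between Cartier crystals on $A$ and $V$-crystals on $\bighat{A}$ (\cite[Theorem 3.2.1]{Baudin_Positive_characteristic_generic_vanishing_theory}) forces each $\cH^j(\cN')$ with $j \neq 0$ to be a \emph{nilpotent} $V$-module. Since $V$ is finite flat (\'etale when $A$ is ordinary, by \autoref{equivalence_ordinary}), the functor $V^{e,*}$ is exact and commutes with filtered colimits and with $\Tor$, so a nilpotent $V$-module $\cK$ satisfies $\colim_e V^{e,*}\cK = 0$ and hence $\colim_e \Tor_i(V^{e,*}\cK, k(0)) = 0$. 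Feeding this into the colimit of the hyper-Tor spectral sequence, whose $E_2$-terms are $\colim_e \Tor_s(V^{e,*}\cH^t(\cN'), k(0))$, kills every row except $t = 0$ and gives $\colim_e \Tor_i(V^{e,*}\cN', k(0)) \cong \colim_e \Tor_i(V^{e,*}\cN, k(0))$, proving the displayed isomorphism. The Euler characteristic statement follows by taking alternating sums: since $h^i_{ss}(A, \cM) = \dim_k \colim_e \Tor_i(V^{e,*}\cN, k(0))$, we get $\chi_{ss}(A, \cM) = \sum_i (-1)^i \dim_k \colim_e \Tor_i(V^{e,*}\cN, k(0)) = \chi_{ss}(\cN)$ by \autoref{not_euler_char_V_mods}. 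The genuinely deep ingredient is the degree-$0$ concentration of $\FM_A(\cM)$ as a $V$-crystal; the rest is bookkeeping with Mittag--Leffler, duality, and the exactness of $V^{e,*}$.
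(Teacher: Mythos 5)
Your proposal is correct, and the first thing to note is that the paper itself contains no argument for this lemma: its proof is the single citation \cite[Lemma 3.3.7]{Baudin_Positive_characteristic_generic_vanishing_theory}. What you have written is therefore a reconstruction of the outsourced proof rather than a parallel to anything in the present text, and your skeleton is the natural one: (i) identify $\varprojlim_e H^i(A, F^e_*\cM)$ with $H^i(A, \cM^{\perf}) = H^i_{ss}(A,\cM)$ using Mittag--Leffler for systems of finite-dimensional spaces together with \autoref{Gabber_finiteness} and \autoref{rem_ss_cohomology}; (ii) use the exchange $\FM_A(F^e_*\cM) \cong V^{e,*}\FM_A(\cM)$, obtained from \autoref{properties_Fourier_Mukai_transform}.\autoref{itm:behaviour_pushforwards_and_pullbacks} applied to the relative Frobenius together with the identification $\bighat{A}^{(p)} \cong \bighat{A}$, and then dualize the finite-dimensional systems via $(\colim_e T_e)^\vee \cong \varprojlim_e T_e^\vee$; (iii) replace the complex $\FM_A(\cM)$ by its $\cH^0$ by killing the other cohomology sheaves in the colimit. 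Your mechanics in (iii) are also sound: a nilpotent $V$-module $\cK$ satisfies $\colim_e V^{e,*}\cK = 0$ because the composite transition $\cK \to V^{N,*}\cK$ is $\theta^N = 0$, flatness of the isogeny $V$ gives $\cH^t(V^{e,*}\cN') \cong V^{e,*}\cH^t(\cN')$, and the filtered colimit of the hyper-Tor spectral sequences then collapses onto the $t = 0$ row.

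The one point to repair is the justification of the key input in (iii). You assert that the equivalence between Cartier crystals and $V$-crystals \emph{forces} each $\cH^j(\FM_A(\cM))$ with $j \neq 0$ to be nilpotent. As phrased, this is not an inference: an equivalence implemented by $\cH^0 \circ \FM_A$ says nothing, by itself, about the cohomology sheaves of $\FM_A(\cM)$ in other degrees. The nilpotence of $\cH^j(\FM_A(\cM))$ for $j \neq 0$ is exactly the positive-characteristic generic vanishing theorem (the precise meaning of ``Cartier modules are GV-sheaves up to nilpotence''), which is a theorem of Hacon--Patakfalvi and of \cite{Baudin_Positive_characteristic_generic_vanishing_theory}, recalled with these citations at the opening of \autoref{section_Euler_char_V_modules}; in those references it is established alongside the crystal equivalence, not deduced from it as a formal consequence. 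If you invoke that statement directly, your argument is complete; the remaining issues (the semilinear twists hidden in the identification $\bighat{A}^{(p)} \cong \bighat{A}$, and the functoriality of the duality $H^i(A, F^e_*\cM)^\vee \cong \Tor_i(\FM_A(F^e_*\cM), k(0))$ needed to match the two systems of transition maps) are genuine bookkeeping of the sort carried out in the cited reference, and you flag them appropriately.
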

\begin{proof}
	See \cite[Lemma 3.3.7]{Baudin_Positive_characteristic_generic_vanishing_theory}.
\end{proof}

Thanks to \autoref{SCSL for H^i = SCSL for Tor^i}, our approach to prove \autoref{intro_prop_euler_char} will be to study the semistable Euler characteristic of $V$--modules.

\subsection{Semistable Euler characteristic of $V$-modules}

Recall that a module is said to be \emph{torsion} if it is zero on some dense open subset. Our goal here is to show the following:

\begin{prop}\label{euler_char_V_modules}
	Assume that $A$ is ordinary. Then for any $V$--module $\cN$ on $\bighat{A}$, we have \[ \chi_{ss}(\cN) \geq 0. \] If $\cN$ is in addition torsion, then \[ \chi_{ss}(\cN) = 0. \]
\end{prop}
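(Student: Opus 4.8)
The plan is to split $\chi_{ss}(\cN)$ into a ``lisse'' contribution of positive rank and a torsion contribution, and to show that the first equals the rank of a vector bundle (hence is $\geq 0$) while the second vanishes. First I would record that $\chi_{ss}(-)$ is additive on short exact sequences of $V$-modules and vanishes on nilpotent ones, so that it descends to the Grothendieck group of $V$-crystals. Additivity holds because $V$ is flat — indeed \'etale, by ordinarity and \autoref{equivalence_ordinary} — so each $V^{e,*}$ is exact; the functors $\Tor_i(-,k(0))$ then give a long exact sequence, and the filtered colimit over $e$ (exact on $k$-vector spaces) produces a long exact sequence of the $\colim \Tor_i$. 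Vanishing on a nilpotent $\cN$ with $\theta^N=0$ is immediate, since the $N$-fold transition map in each colimit is zero.

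Next I would set up the local computation. The origin $0$ is fixed by the isogeny $V$, and since $V$ is \'etale the induced endomorphism of $\cO_{\bighat{A},0}$ becomes, after completion, an automorphism $\hat\sigma$ acting bijectively on the cotangent space $\fm/\fm^2$ (the transpose of $dV_0$). As $\Tor_i(-,k(0))$ is local at $0$ and commutes with filtered colimits, a minimal free resolution of $\cN_0$ identifies $\colim_e \Tor_i(V^{e,*}\cN,k(0))$ with the \emph{bijective part} $\bigcap_n \im \phi_i^{\,n}$ of a $\bar\sigma$-semilinear endomorphism $\phi_i$ on $T_i := \Tor_i(\cN_0,k(0))$, using the Fitting decomposition $T_i = (T_i)_{\mathrm{bij}} \oplus (T_i)_{\mathrm{nil}}$ for semilinear maps over the perfect field $k$. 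Thus $\chi_{ss}(\cN) = \sum_i (-1)^i \dim_k (T_i)_{\mathrm{bij}}$, to be compared with the regular-local-ring identity $\sum_i (-1)^i \dim_k \Tor_i(\cN_0,k(0)) = \rank(\cN)$ (the same principle as \autoref{Linus_paper_trick}).

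I would then dévisser at the level of crystals. Passing to the injective model (\autoref{canonical_injective_V_module}) and using that $V$-crystals are Noetherian and Artinian (via the Fourier--Mukai equivalence with Cartier crystals, \cite{Baudin_Positive_characteristic_generic_vanishing_theory, Blickle_Bockle_Cartier_modules_finiteness_results}), one obtains a two-step filtration of the crystal $\cN$ with graded pieces a torsion crystal $\cT$ and a \emph{unit} $V$-module $\cG$, i.e. $\theta \colon \cG \xrightarrow{\sim} V^*\cG$. A unit $V$-module is automatically locally free: its Fitting subschemes are $V$-stable, and since $V$ is an isogeny of degree $p^g>1$ the only $V$-stable closed subsets of $\bighat{A}$ are $\emptyset$ and $\bighat{A}$. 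For such a locally free $\cG$ the higher $\Tor$ vanish and $\phi_0$ is bijective, so $\chi_{ss}(\cG) = \rank(\cG) \geq 0$. By additivity the proposition reduces to the torsion statement.

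Finally, the torsion case — which I expect to be the main obstacle — asks that $\chi_{ss}(\cT)=0$ whenever $\cT$ is supported on a proper closed $Z \subsetneq \bighat{A}$. If $0 \notin Z$ then $0 \notin V^{-e}(Z)$ for all $e$ (as $V^e(0)=0$), so every stalk $(V^{e,*}\cT)_0$ vanishes and $\chi_{ss}(\cT)=0$ trivially. The genuine case $0 \in Z$ I would treat by Noetherian induction on $\dim Z$, reducing to $\cT$ supported on an integral $W \ni 0$ of dimension $d<g$. Choosing a transverse regular sequence cutting out $W$ at $0$, the change-of-rings spectral sequence factors $\Tor_\bullet(\cT_0,k(0))$ as $\Tor_\bullet^{\cO_W}(\cT_0,k(0)) \otimes_k \wedge^\bullet(k^{g-d})$, and — crucially — the transverse Koszul factor $\wedge^\bullet(k^{g-d})$ carries a bijective semilinear action, since $\hat\sigma$ acts invertibly on all of $\fm/\fm^2$. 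Extracting bijective parts, the alternating sum factors through $\sum_j (-1)^j \binom{g-d}{j} = 0$, giving $\chi_{ss}(\cT)=0$. The delicate points I anticipate are the compatibility of the semilinear action with this spectral sequence and arranging the transverse directions $\hat\sigma$-stably; this is where the bulk of the work will lie.
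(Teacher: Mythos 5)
There are two genuine gaps here, and the first one is fatal: the d\'evissage in your third paragraph is false. Take $g = 1$, so $\bighat{A} = \bighat{E}$ is an ordinary elliptic curve, let $\cL = \cO_{\bighat{E}}(2 \cdot 0)$, and note that $\Hom(\cL, V^*\cL) = H^0(\bighat{E}, V^*\cL \otimes \cL^{-1})$ has dimension $2p - 2 \geq 2$, so we may choose a nonzero $\theta \colon \cL \to V^*\cL$ that vanishes at the origin. Then $(\cL, \theta)$ is a torsion-free, injective $V$--module. Since $V(0) = 0$ and $\theta$ vanishes at $0$, every pullback $V^{e,*}\theta$ also vanishes at $0$, so all transition maps in the system $\{V^{e,*}\cL \otimes k(0)\}_e$ are zero and hence $\chi_{ss}(\cL, \theta) = 0$. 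Now suppose $(\cL, \theta)$ were, as a crystal, a two-step extension of a torsion crystal and a unit locally free crystal $\cG$ (in either order). The functor $\cN \mapsto \colim_e V^{e,*}\cN$ is exact and kills nilpotent modules, hence descends to crystals; applying it, the generic rank of $\colim_e V^{e,*}\cL$, which equals $1$ (the transition maps are injective maps of line bundles, hence generically isomorphisms), must equal $\rank(\cG)$. But then additivity of $\chi_{ss}$, together with $\chi_{ss}(\cG) = \rank(\cG)$ for unit locally free $\cG$ and $\chi_{ss} = 0$ for torsion crystals (the second half of the proposition, which is true), would force $\chi_{ss}(\cL, \theta) = 1$, a contradiction. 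The moral is that $\chi_{ss}$ is a genuinely local invariant at the origin: it detects zeros of the structural map at $0$, which no global torsion/unit decomposition can see. (Relatedly, your justification that unit modules are locally free is incomplete: degree $p^g > 1$ alone does not exclude proper totally invariant closed subsets --- for supersingular $A$ the set $\{0\}$ is totally invariant under $V$; one needs \'etaleness of $V$ and the density of $\bigcup_e \ker(V^e)$, i.e. ordinarity.)

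The second gap is the one you flag yourself, and it is where all the content lies (your reduction would need it anyway): the torsion case. Your plan requires choosing, for an integral $W \ni 0$ supporting $\cT$, a transverse regular sequence whose Koszul factor carries a bijective semilinear $V$--action; but $W$ need not be $V$--invariant in any sense, so there is no reason a $V$--compatible choice of transverse directions exists, and this is not a detail one can defer. The paper resolves precisely this point, and does so in a way that avoids looking at supports altogether: by \autoref{nice_form_for_V} (which uses \'etaleness of $V$, Mumford's fixed-point theorem for $p$--linear maps, and successive approximation order by order) one finds generators $y_1, \dots, y_g$ of the maximal ideal of $\cO_{\bighat{A}, 0}$ with $V(y_i) = y_i$; one then filters an arbitrary module by its $y_i$--power torsion (\autoref{euler_char_torsion_V_modules}) and inducts on $g$ along the $V$--stable hyperplanes $\{y_i = 0\}$. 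If you want to salvage your outline, that lemma (or an equivalent statement) is the missing ingredient --- and once you have it, the global d\'evissage becomes unnecessary, since the whole argument can be run locally at $0$ as in the paper.
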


Throughout, assume that $A$ is ordinary, and let $g \coloneqq \dim(A)$. The idea will be to work at the completion $\cO_{\bighat{A}, 0} \cong k\llbracket x_1, \dots, x_g \rrbracket \eqqcolon R$. Let $\fm$ denote its maximal ideal. The following lemma will make the situation much easier:

\begin{lemma}\label{nice_form_for_V}
	There exists $y_1, \dots, y_g \in R$ such that $\fm = (y_1, \dots, y_g)$ and $V(y_i) = y_i$.
\end{lemma}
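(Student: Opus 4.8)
The plan is to analyze the single ring automorphism of $R$ induced by $V$ and then produce fixed generators of $\fm$ by successive approximation. Since $V$ is a homomorphism of abelian varieties it fixes the origin $0 \in \widehat{A}$, so it induces a local homomorphism $\sigma \colon R \to R$ on the completed local ring at $0$; concretely $\sigma$ is the $\fm$-adic completion of $V^\#$ on $\cO_{\widehat A, 0}$. Because $A$ is ordinary, \autoref{equivalence_ordinary} tells us that $V$ is étale, and since the residue field map at $0$ is the isomorphism $F^{-1} \colon k \to k$, the map $\sigma$ is an isomorphism of complete local rings. By construction $\sigma$ is $F^{-1}$-semilinear, i.e. $\sigma(\lambda r) = \lambda^{1/p}\sigma(r)$ for $\lambda \in k$, it is $\fm$-adically continuous, and it preserves the $\fm$-adic filtration together with its inverse. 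The statement to prove becomes: $\fm$ admits a system of generators fixed by $\sigma$.

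First I would settle the linear-algebra input on the graded pieces. For each $n \geq 1$ set $W_n \coloneqq \fm^n/\fm^{n+1}$, a finite-dimensional $k$-vector space on which $\sigma$ induces an $F^{-1}$-semilinear \emph{bijection} $\tau_n$ (bijective because $\sigma$ is an automorphism respecting the filtration). I claim that for any finite-dimensional $k$-vector space $W$ with an $F^{-1}$-semilinear bijection $\tau$: (i) the $\bF_p$-subspace $W^\tau$ of fixed vectors spans $W$ over $k$, and (ii) the additive map $\tau - \id \colon W \to W$ is surjective. Writing $\tau$ in a basis as $\tau(v) = A\,v^{(1/p)}$ with $A \in \GL_m(k)$, the fixed vectors correspond to the $k$-points of the kernel of the additive endomorphism $d \mapsto d^{(p)} - A\,d$ of $\bG_a^m$; this endomorphism is finite of degree $p^m$ and étale, as its Jacobian is the invertible constant matrix $-A$ — this is exactly where ordinarity (étaleness of $V$, hence invertibility of $A$) enters. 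Hence its kernel is an étale group scheme with $p^m$ geometric points, giving $m$ $\bF_p$-independent fixed vectors; these are automatically $k$-linearly independent by the standard semilinear independence argument (a minimal $k$-relation among fixed vectors, after applying $\tau$ and subtracting, shortens unless its coefficients lie in the fixed field $\bF_p$ of $F^{-1}$). This proves (i), and in a resulting $\tau$-fixed $k$-basis one computes $(\tau - \id)(\sum c_i w_i) = \sum (c_i^{1/p} - c_i) w_i$, which is surjective by Artin--Schreier surjectivity of $c \mapsto c^{1/p} - c$ on the algebraically closed field $k$; this gives (ii).

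With (i) and (ii) in hand I would build the generators. Using (i) for $W_1 = \fm/\fm^2$, choose $\sigma$-fixed classes $\overline{y}_1, \dots, \overline{y}_g$ forming a $k$-basis of the cotangent space, and lift each to some $y_i^{(1)} \in \fm$; the error $\sigma(y_i^{(1)}) - y_i^{(1)}$ then lies in $\fm^2$. Inductively, given $y_i^{(n)}$ with $\sigma(y_i^{(n)}) - y_i^{(n)} \in \fm^{n+1}$, I use (ii) on $W_{n+1}$ to find $z \in \fm^{n+1}$ whose class solves $\tau_{n+1}(\overline z) - \overline z = -\overline{e}$, where $\overline e$ is the image of the error in $W_{n+1}$; since $\sigma$ induces $\tau_{n+1}$ on this graded piece and is additive, $y_i^{(n+1)} \coloneqq y_i^{(n)} + z$ satisfies $\sigma(y_i^{(n+1)}) - y_i^{(n+1)} \in \fm^{n+2}$. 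As $R$ is $\fm$-adically complete and $\sigma$ is continuous, the sequence $y_i^{(n)}$ converges to some $y_i \in \fm$ with $\sigma(y_i) = y_i$ and $y_i \equiv \overline y_i \pmod{\fm^2}$. Finally Nakayama's lemma shows that $y_1, \dots, y_g$ generate $\fm$, completing the argument.

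The main obstacle is part (i) of the linear-algebra claim — that an $F^{-1}$-semilinear bijection of a finite-dimensional space over $k$ has a full-rank lattice of fixed vectors — since this is where the étaleness of the Verschiebung, equivalently the ordinarity hypothesis, is genuinely used (for a supersingular factor the corresponding matrix $A$ degenerates and fixed vectors are lost, matching the failure of the result noted in the introduction). The remaining ingredients — the reduction to $\sigma$, the graded successive approximation, and the closing Nakayama step — are routine once (i) and (ii) are available.
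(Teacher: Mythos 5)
Your proof is correct and follows essentially the same strategy as the paper's: use ordinarity (étaleness of $V$) to obtain an invertible linearization, produce $V$-fixed generators modulo $\fm^2$ via the fixed-point theorem for Frobenius-semilinear bijections (the paper cites Mumford's Corollary p.~143, where you instead prove it via the kernel of an étale additive group-scheme endomorphism), and then correct to all orders by successive approximation using Artin--Schreier solvability over the algebraically closed field $k$. The only difference is organizational: the paper solves explicit coefficient equations $b^i_{st} + (c^i_{st})^{1/p} = c^i_{st}$ at each order, whereas you package the same computation as surjectivity of $\tau - \id$ on each graded piece $\fm^{n}/\fm^{n+1}$, followed by a Nakayama step the paper leaves implicit.
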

\begin{rem}
	Recall that $V|_k = (\cdot)^{1/p}$ by construction, so the lemma above does \emph{not} imply that $V = \id_R$ (which would be very wrong).
\end{rem}
\begin{proof}
	Let us first construct the $y_i$'s up to first order. Write \[ V(x_i) = \sum_j a_{ij} x_j \mod \fm^2, \] with $A \coloneqq (a_{ji}) \in \GL_g(k)$ ($V$ is étale by \autoref{equivalence_ordinary}), and let $\lambda = (\lambda_1, \dots, \lambda_g)^T \in k^g$. Then modulo $\fm^2$, we have \begin{equation}\label{equation_fixed_points}
		V\left( \sum_i \lambda_ix_i \right) \longexpl{=}{$V|_k = (\cdot)^{1/p}$ by construction} \sum_j \left(\sum_i \lambda_i^{1/p}a_{ij}\right) x_j.
	\end{equation} Hence, to find $g$ generators of $\fm$ fixed by $V$ up to second order, we have to find a basis $\lambda(1), \dots, \lambda(g)$ of $k^g$ such that for all $l$, \[ A \cdot \lambda(l)^{1/p} = \lambda(l), \] where for a vector $v = (v_1, \dots, v_g)^T \in k^g$, we write $v^{1/p} = (v_1^{1/p}, \dots, v_g^{1/p})$. Indeed, then the elements $\sum_i y_i = \lambda(l)_ix_i$ will generate $\fm$ and be fixed by $V$ modulo $\fm^2$ by \autoref{equation_fixed_points}.

	 Up to the change of variables $\mu(l) \coloneqq \lambda(l)^{p}$ and setting $B \coloneqq A^{-1}$, we want a basis of $k^g$ of solutions of the system \[ B\cdot \mu(l)^{p} = \mu(l). \] Note that the additive map \[ \begin{tikzcd}[row sep = tiny]
		k^g \arrow[rr, "\phi"]        &  & k^g            \\
		v \arrow[rr, maps to] &  & B\cdot v^{p}. \end{tikzcd} \] 
	is $p$--linear and injective. By \cite[Corollary p.143]{Mumford_Abelian_Varieties}, there is a basis of $k^g$ consisting of fixed points of $\phi$. We then conclude that there exist $y_1, \dots, y_g \in R$ such that $\fm = (y_1, \dots, y_g)$ and $V(y_i) = y_i$ modulo $\fm^2$. \\
	
	Let us construct a new generating set that is fixed modulo $\fm^3$. Write \[ V(y_i) = y_i + \sum_{s,t}b^i_{st}y_sy_t \mod \fm^3 \] with $b^i_{st} \in k$, and let $c^i_{st}$ be a solution of the equation \[ b^i_{st} + (c^i_{st})^{1/p}  = c^i_{st}. \] (such a solution exists since $k$ is algebraically closed). Then one checks immediately that \[ V\left(y_i + \sum_{s, t}c^i_{st}y_sy_t\right) = y_i + \sum_{s, t}c^i_{st}y_sy_t \mod \fm^3, \] i.e. we have our generating set fixed modulo $\fm^3$. Continuing indefinitely like this concludes the proof.
\end{proof}

From now on, assume that $V(x_i) = x_i$ for all $i$ (we may do so by \autoref{nice_form_for_V}). Since $V$ is an endomorphism of $R$, there is an straightforward notion of $V$--module on $R$. We will also use \autoref{not_euler_char_V_mods} in this setting. Furthermore, note that by assumption, $V$ also restrict to a morphism $V \colon R/x_i \to R/x_i$. When $N$ is a $V$--module on $R/x_i$ (and hence it is also a $V$-module on $R$), we will write $\chi_{ss}^{R/x_i}(N)$ (resp. $\chi_{ss}^R(N)$) for the quantity $\chi_{ss}$ computed on $R/x_i$ (resp. on $R$).

\begin{lemma}\label{euler_char_torsion_V_modules}
	Let $N$ be a $V$--module on $R = k\llbracket x_1, \dots, x_g \rrbracket$ such that $N$ is $x_i^e$--torsion for some $e > 0$ and $i \geq 0$. Then $\chi_{ss}(N) = 0$.
\end{lemma}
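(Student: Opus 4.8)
The plan is to reduce, by a dévissage along powers of $x_i$, to a $V$-module killed by $x_i$, and then to compute the relevant $\Tor$ groups via a Koszul resolution, where a sign cancellation forces $\chi_{ss}$ to vanish; after relabelling the variables we may assume $i = 1$, so that $x_1^e N = 0$. The first ingredient I would establish is that $\chi_{ss}(-)$ is additive on short exact sequences of $V$-modules. Since $A$ is ordinary, $V$ is étale by \autoref{equivalence_ordinary}, so each $V^{e, *}$ is exact; applying it to $0 \to N' \to N \to N'' \to 0$ and then $\Tor_\bullet^R(-, k(0))$ yields long exact sequences compatible in $e$. As $R$ is regular of dimension $g$, these $\Tor$ groups vanish for $i > g$ and have uniformly bounded dimension, so passing to the (exact) filtered colimit over $e$ produces a bounded long exact sequence of finite-dimensional $k$-vector spaces, whose alternating dimension count gives $\chi_{ss}(N) = \chi_{ss}(N') + \chi_{ss}(N'')$.

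I would then exploit the filtration $N \supseteq x_1 N \supseteq \dots \supseteq x_1^e N = 0$. Because $V(x_1) = x_1$, each $x_1^j N$ is a $V$-submodule: by $R$-linearity $\theta(x_1^j n) = x_1^j \theta(n)$, and in $V^* N = R \otimes_{V, R} N$ the relation $a \otimes x_1^j m = a V(x_1^j) \otimes m = a x_1^j \otimes m$ exhibits $\theta(x_1^j n)$ as coming from $x_1^j N$. Hence the successive quotients $Q_j = x_1^j N / x_1^{j+1} N$ are $V$-modules killed by $x_1$, and additivity gives $\chi_{ss}(N) = \sum_{j=0}^{e-1} \chi_{ss}(Q_j)$. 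It therefore suffices to treat a $V$-module $Q$ with $x_1 Q = 0$.

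For such a $Q$, since $V(x_1) = x_1$, each $V^{e, *}Q$ is again killed by $x_1$, hence is a module over $\bar R := R/x_1 \cong k\llbracket x_2, \dots, x_g \rrbracket$. Resolving $k(0)$ by the Koszul complex on $x_1, \dots, x_g$ and using that $x_1$ acts as zero on $V^{e, *}Q$, the factor $K_\bullet(x_1)$ contributes a tensor factor with vanishing differential, producing an isomorphism
\[ \Tor_i^R(V^{e, *}Q, k(0)) \cong \Tor_i^{\bar R}(V^{e, *}Q, k(0)) \oplus \Tor_{i-1}^{\bar R}(V^{e, *}Q, k(0)) \]
natural in $V^{e, *}Q$, hence commuting with $\colim_e$. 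Writing $a_i := \dim_k \colim_e \Tor_i^{\bar R}(V^{e, *}Q, k(0))$, this yields $\chi_{ss}(Q) = \sum_i (-1)^i (a_i + a_{i-1}) = 0$ by telescoping, which together with the previous paragraph completes the proof. I expect the main obstacle to be the bookkeeping in this last step: one must verify that the Koszul splitting is functorial enough to survive the colimit over $e$, so that the cancellation holds at the level of $\chi_{ss}$ and not merely for each fixed $e$. The other point needing care is that the $x_1$-adic filtration consists of $V$-submodules, which is exactly where the normalisation $V(x_1) = x_1$ from \autoref{nice_form_for_V} enters.
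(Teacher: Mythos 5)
Your proof is correct, and its core mechanism is the same as the paper's: d\'evissage to a module killed by $x_1$, followed by a computation showing that the $\Tor$ groups over $R$ split into two contributions, in degrees $i$ and $i-1$, that cancel in the alternating sum. Both steps are packaged differently, though. For the d\'evissage, the paper filters by the torsion submodules $N_j = \{ n \in N : x_1^j n = 0 \}$ rather than by the images $x_1^j N$; both are filtrations by $V$--submodules thanks to the normalization $V(x_1) = x_1$, so this difference is immaterial. For the key computation, the paper regards $0 \to R \xrightarrow{x_1} R \to R/x_1 \to 0$ as an exact sequence of $V$--modules, deduces that $\Tor^R_0(N, R/x_1) \cong \Tor^R_1(N, R/x_1) \cong N$ as $V$--modules on $R/x_1$, and concludes via the change-of-rings spectral sequence $\Tor^{R/x_1}_a(\Tor^R_b(N, R/x_1), k) \Rightarrow \Tor^R_{a+b}(N, k)$, whose two rows contribute $\chi^{R/x_1}_{ss}(N)$ with opposite signs. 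Your Koszul splitting $\Tor^R_i(M, k) \cong \Tor^{R/x_1}_i(M, k) \oplus \Tor^{R/x_1}_{i-1}(M, k)$ is the explicit, degenerate form of that spectral sequence --- the Koszul factor $K_\bullet(x_1)$ is precisely the paper's two-term resolution of $R/x_1$ --- and what it buys is exactly the point you flag as delicate: the splitting is the canonical decomposition of the tensor-product complex, natural for $R/x_1$--linear maps, and the transition maps $V^{e,*}Q \to V^{e+1,*}Q$ are such maps (being $R$--linear maps between modules killed by $x_1$), so compatibility with $\colim_e$ is transparent and no spectral-sequence bookkeeping is needed. You also prove additivity of $\chi_{ss}$ on short exact sequences (flatness of the \'etale map $V$, exactness of filtered colimits, and uniform boundedness of the $\Tor$'s), which the paper invokes without proof both here and in the induction proving \autoref{euler_char_V_modules}; filling that in is a worthwhile addition.
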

\begin{proof}
	For all $j$, let $N_j \inc N$ be the submodule of $x_i^j$-torsion elements. Then all these $N_j$'s are $V$-submodules, and the successive quotients of the filtration $0 \inc N_1 \inc N_2 \inc \dots \inc N$ are all $x_i$--torsion $V$-modules. By additive of the Euler characteristic, we are reduced to the case where $N$ is $x_j$--torsion. Note that $R$ is naturally a $V$--module, and since $V(x_i) = x_i$, we have an exact sequence of $V$--modules \[ \begin{tikzcd}
		0 \arrow[rr] &  & R \arrow[rr, "x_i"] &  & R \arrow[rr] &  & R/x_i \arrow[rr] &  & {0,}
	\end{tikzcd} \]
	(hence $R/x_i$ is also a $V$-module). This shows that as $V$--modules on $R/x_i$, \[ \Tor^R_0(N, R/x_i) \cong \Tor^R_1(N, R/x_i) \cong N. \] A straightforward computation with the Leray spectral sequence \[ \Tor^{R/x_i}_a(\Tor^R_b(N, R/x_i), k) \implies \Tor^R_{a + b}(N, k) \] shows that \[ \chi^R_{ss}(N) = \chi^{R/x_i}_{ss}(N) - \chi^{R/x_i}_{ss}(N) = 0. \qedhere \]
\end{proof}

\begin{proof}[Proof of \autoref{euler_char_V_modules}]
	As already stated, it is enough to show the result for  $V$--modules on $R = k\llbracket x_1, \dots, x_g\rrbracket \cong \cO_{\bighat{A}, 0}$, and assume by \autoref{nice_form_for_V} that $V(x_i) = x_i$. 
	
	Let $N$ be a $V$--module on $R$. Then we will show that $\chi_{ss}(N) \geq 0$, and if $N$ is torsion, that $\chi_{ss}(N) = 0$. Let us show the result by induction on $g$. If $g = 0$, the result is trivial, so assume that $g > 0$.
	
	Let $N' \subseteq N$ denote the $V$--submodule of $N$ consisting of $x_g^e$--torsion elements for some $e > 0$. Then we have a short exact sequence \[ \begin{tikzcd}
		0 \arrow[rr] &  & N' \arrow[rr] &  & N \arrow[rr] &  & N'' \arrow[rr] &  & {0,}
	\end{tikzcd} \] so by \autoref{euler_char_torsion_V_modules} and additivity of $\chi_{ss}$, it is enough to show the result for $N''$, which is $x_g$--torsion free. Then $\Tor_i^R(N'', R/x_g) = 0$ for all $i > 0$, so by the Leray spectral sequence \[ \Tor_j^{R/x_g}\left(\Tor_i^R(N'', R/x_g), k\right) \implies \Tor_{i + j}^R(N'', k), \] we have \[ \chi^R_{ss}(N'') = \chi^{R/x_g}_{ss}(N''/x_gN''). \] Hence, we can conclude the proof by induction (note that if $N$ was torsion to begin with, then also $N''/x_gN''$ would be torsion in $R/x_g$ too).
\end{proof}

\begin{cor}\label{Euler_char_Cartier_modules_ordinary}
	Let $\cM$ be a Cartier module on an ordinary abelian variety $A$. Then $\chi_{ss}(A, \cM) \geq 0$. Furthermore, if there exists a Cartier module $\cM' \sim_C \cM$ and $L \in \Pic^0(A)$ such that $H^0(A, \cM' \otimes L) = 0$, then $\chi_{ss}(A, \cM) = 0$.
\end{cor}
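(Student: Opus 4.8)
The plan is to transport the entire question across the Fourier--Mukai transform, where it reduces to the already-established \autoref{euler_char_V_modules}. Concretely, I would set $\cN \coloneqq \cH^0(\FM_A(\cM))$, which carries a natural $V$--module structure by \autoref{properties_Fourier_Mukai_transform}.\autoref{itm:behaviour_pushforwards_and_pullbacks}, and invoke \autoref{SCSL for H^i = SCSL for Tor^i} to obtain the identity $\chi_{ss}(A, \cM) = \chi_{ss}(\cN)$. Once this bridge is in place, the first assertion is immediate: since $A$ is ordinary, \autoref{euler_char_V_modules} gives $\chi_{ss}(\cN) \geq 0$, and therefore $\chi_{ss}(A, \cM) \geq 0$.

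For the vanishing statement I would first record that $\chi_{ss}$ depends only on the underlying Cartier crystal. Indeed, \autoref{rem_ss_cohomology} identifies $H^i_{ss}(A, \cM)$ with $H^i(A, \cM^{\perf})$, and perfection carries nil--isomorphisms to isomorphisms by \autoref{Gabber_finiteness}.\autoref{itm:nil_iso_iff_perf_iso}; hence $\cM \sim_C \cM'$ forces $\chi_{ss}(A, \cM) = \chi_{ss}(A, \cM')$. This lets me replace $\cM$ by the nil--isomorphic $\cM'$ and assume at the outset that $H^0(A, \cM \otimes L) = 0$ for some $L \in \Pic^0(A)$. Writing $L = \cP_\alpha$ with $\alpha \in \bighat{A}$, the support computation \autoref{support_H0} reads $\Supp \cN = \set{\beta \in \bighat{A}}{H^0(A, \cM \otimes \cP_\beta) \neq 0}$, so the hypothesis translates exactly into $\alpha \notin \Supp \cN$.

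It then remains to convert this into torsion-ness of $\cN$. As $\cN$ is coherent, its support is closed, and since the point $\alpha$ lies outside it, $\Supp \cN$ is a \emph{proper} closed subset of the integral scheme $\bighat{A}$. Consequently $\cN$ vanishes on the dense open complement, i.e. $\cN$ is torsion. The torsion case of \autoref{euler_char_V_modules} now yields $\chi_{ss}(\cN) = 0$, and \autoref{SCSL for H^i = SCSL for Tor^i} propagates this back to $\chi_{ss}(A, \cM) = 0$.

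I do not anticipate a genuine obstacle, since all the substantive work has been front-loaded into \autoref{euler_char_V_modules} and the Fourier--Mukai dictionary of \autoref{properties_Fourier_Mukai_transform}. The only step that demands a moment's attention is the crystal--invariance of $\chi_{ss}$: it is what allows me to apply the support formula to the module $\cM'$ realizing the $H^0$--vanishing rather than to $\cM$ itself, after which everything else is a direct substitution.
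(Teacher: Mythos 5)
Your proposal is correct and takes essentially the same route as the paper: transfer the question through \autoref{SCSL for H^i = SCSL for Tor^i} to the $V$--module $\cH^0(\FM_A(\cM))$, apply \autoref{euler_char_V_modules} for both the non-negativity and the torsion case, detect torsion-ness via the support formula \autoref{properties_Fourier_Mukai_transform}.\autoref{support_H0}, and use invariance of $\chi_{ss}$ under $\sim_C$ via perfection. The only elision is that an isomorphism of crystals $\cM \sim_C \cM'$ is not literally a nil--isomorphism of Cartier modules (it is a morphism in the quotient category, which the paper converts to module-level data by additionally invoking \cite[Lemma 3.5.(b)]{Blickle_Bockle_Cartier_modules_finiteness_results}), but the conclusion $\cM^{\perf} \cong \cM'^{\perf}$, and hence your argument, stands.
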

\begin{proof}
	The first statement follows from \autoref{euler_char_V_modules} and \autoref{SCSL for H^i = SCSL for Tor^i}. Assume now that there exists a Cartier module $\cM'$ such that $\cM' \sim_C \cM$ and that $H^0(A, \cM' \otimes L) = 0$ for some $L \in \Pic^0(A)$. This second hypothesis implies that $\Supp \cH^0(\FM_A(\cM')) \neq \bighat{A}$ by \autoref{properties_Fourier_Mukai_transform}.\autoref{support_H0}, so $\chi_{ss}(A, \cM') = 0$ by \autoref{euler_char_V_modules} and \autoref{SCSL for H^i = SCSL for Tor^i}. Finally, since $\cM' \sim_C \cM$, we obtain that $\cN^{\perf} \cong \cM^{\perf}$ by \autoref{Gabber_finiteness}.\autoref{itm:nil_iso_iff_perf_iso} and \cite[Lemma 3.5.(b)]{Blickle_Bockle_Cartier_modules_finiteness_results}, and hence $\chi_{ss}(A, \cM') = \chi_{ss}(A, \cM)$.
\end{proof}

%

%

\section{Proof of the main results}

Let us recall the existence of Albanese varieties that we need in our generality. The reference that we use (\cite{Schroer_Laurent_Para_Abelian_varieties_and_Albanese_maps}) deals with the more general notion of para--abelian varieties. Since $k$ is algebraically closed, a para--abelian abelian variety is the same as an abelian variety, see \cite[First paragraph in Section 4 and Proposition 4.3]{Schroer_Laurent_Para_Abelian_varieties_and_Albanese_maps}
. 
\begin{sthm}\label{existence_albanese}
	Let $X$ be a proper $k$--scheme such that $H^0(X, \cO_X) = k$. Then there exists a unique morphism $\alb_X \colon X \to \Alb(X)$ to an abelian variety $\Alb(X)$ such that for any other morphism $g \colon X \to Q$ to an abelian variety, there exists a unique morphism $h \colon \Alb(X) \to Q$ such that $h \circ \alb_X = g$.
	
	Furthermore, the pullback of line bundles map $\Pic^0_{\Alb(X)/k} \to \Pic^0_{X/k}$ is a closed immersion, and the pullback of sections map $H^1(\Alb(X), \cO_{\Alb(X)}) \to H^1(X, \cO_X)$ is injective.
\end{sthm}
\begin{proof}
	Everything, except the statement about $H^1$--groups, follows from \cite[Definition 8.1 and Theorem 10.2]{Schroer_Laurent_Para_Abelian_varieties_and_Albanese_maps}. Since the tangent space at $\cO_X$ of $\Pic^0_{X/k}$ is exactly $H^1(X, \cO_X)$, and similarly for $\Alb(X)$ instead of $X$ (see for example \cite[Theorem 9.5.11]{Kleiman_The_Picard_Scheme} and \cite[Theorem 2.1]{Schroer_Laurent_Para_Abelian_varieties_and_Albanese_maps}), the statement follows.
\end{proof}
Before stating our main theorem and use the weak Grauert--Riemenschneider sheaf, let us start with the key lemma that will allow us to deal with the case of varieties not of general type. In characteristic zero, this is an immediate consequence of \cite[Theorem 1]{Chen_Hacon_Pluricanonical_maps_of_vars_of_max_alb_dim}. We will adapt their strategy in this very particular case.

\begin{slem}\label{not_of_gen_type_implies_locus_not_full}
	Let $X$ be a normal proper variety with canonical singularities, and let $a \colon X \to A$ be a generically finite morphism to an abelian variety. If $X$ is not of general type, then there exists $L \in \Pic^0(A)$ such that $H^0(X, \omega_X \otimes a^*L) = 0$.
\end{slem}
\begin{proof}
	We need to show that $V^0(a_*\omega_X) \neq \bighat{A}$. Since this can be done after any base change to a field extension of $k$, we may assume that $k$ is uncountable.
	
	Let us show the contrapositive, so assume that $H^0(X, \omega_X \otimes a^*L) \neq 0$ for all $L \in \Pic^0(A)$. We will show that $K_X$ is big. Since $K_X$ is $\bQ$--Cartier by assumption, we can take its associated Iitaka fibration, see e.g. \cite[Theorem 2.1.3.3]{Lazarsfeld_Positivity_in_algebraic_geometry_I} (since this reference requires projectivity, we first apply Chow's lemma and take the Iitaka fibration of the pullback of $K_X$ to this normal projective model). In other words, we have a diagram \[ \begin{tikzcd}
		Y \arrow[rr, "\pi"] \arrow[d, "g"'] &  & X \arrow[rr, "a"] &  & A, \\
		Z                                   &  &                        &  &        
	\end{tikzcd} \] where 
	
	\begin{itemize}
		\item $Y$ is normal;
		\item $\pi$ is birational and projective;
		\item $\dim(Z) = \kappa(X, K_X)$;
		\item $\kappa(Y_{\eta}, K_{Y_{\eta}}) = 0$, where $\eta$ denotes the generic point of $Z$ (here, we use that $X$ has canonical singularities, see e.g. \cite[Lemma 6.2]{Baudin_Effective_Characterization_of_ordinary_abelian_varieties}.
	\end{itemize}
	
	Since $Y$ has maximal Albanese dimension, we know by \cite[Theorem 4.3]{Baudin_Positive_characteristic_generic_vanishing_theory} that $H^0(Y, \omega_Y) \neq 0$, so in particular $H^0(Y_{\eta}, \omega_{Y_{\eta}}) \neq 0$.
	
	Let $h \coloneqq a \circ \pi$, and let $m > 0$ be an integer such that $mK_X$ is Cartier. Since we also know that $H^0(X, \omega_X) \neq 0$ by \emph{loc. cit.}, we deduce that for all $L \in \Pic^0(A)$, we have $H^0(X, \cO_X(mK_X) \otimes a^*L) \neq 0$. Since $X$ has canonical singularities, the $\bZ$--divisor $mK_Y - \pi^*(mK_X)$ is effective, so also $H^0(Y, \cO_Y(mK_Y) \otimes h^*L) \neq 0$ for all $L \in \Pic^0(A)$. This shows in particular that \[ H^0(Y_{\eta}, \cO_{Y_{\eta}}(mK_{\eta}) \otimes h^*L) \neq 0 \] for all $L \in \Pic^0(A)$. Since $\kappa(Y_{\eta}, K_{Y_{\eta}}) = 0$, this forces $h^* \colon \Pic^0(A) \to \Pic(Y_{\eta})$ to be the trivial map  (this is standard in generic vanishing theory, see e.g. the argument of \cite[Proposition 2.1]{Ein_Lazarsfeld_Singularities_of_theta_divisors_and_the_birational_geometry_of_irregular_varieties}). 
	
	Let us deduce that for a very general closed fiber $G$ of $g$, the pullback map $\Pic^0_{A/k} \to \Pic^0_{G/k}$ is constant. Pick $L \in \Pic^0(A)$. Since $h^*L$ is trivial on the generic fiber, we know by \cite[Corollaire 9.4.6]{EGA_IV.3}, it is also trivial when restricted to a general fiber of $g$. In particular, we deduce that $h^*L|_G \cong \cO_G$ for all \emph{torsion} line bundle $L \in \Pic^0(A)$, since $G$ is very general and there are only countably many torsion line bundles in $\Pic^0(A)$. Given that the set of torsion elements in $\Pic^0(A)$ is dense in the scheme $\Pic^0_{A/k}$, we deduce that the induced morphism $\Pic^0_{A/k} \to \Pic^0_{G/k}$ is constant. By \autoref{existence_albanese}, this shows that the induced morphism $\Alb(G) \to A$ is constant. Since $h|_G \colon G \to A$ factors through $\Alb(G)$ by the universal property, we deduce that $h|_G$ is constant. Since it must also be generically finite ($h$ is generically finite), we deduce that $\dim(G) = 0$.  In other words, $\dim(X) = \kappa(X, K_X)$, so $K_X$ is big. \qedhere
	
%
%
\end{proof}

Now, let us prove our main theorem.

\begin{sthm}\label{main_thm_ss}
	Let $X$ be a smooth proper variety admitting a generically finite morphism to an ordinary abelian variety $A$. Then \[ \chi_{ss}(X, \omega_X) \geq 0. \] If $X$ is furthermore not of general type, then  \[ \chi_{ss}(X, \omega_X) = 0. \]
\end{sthm}
\begin{proof}
	Let $a \colon X \to A$ be a generically finite morphism to an ordinary abelian variety. Then by \autoref{weak_GR_sheaf}, we know that \[ \chi_{ss}(X, \omega_X) = \chi_{ss}(A, \omega_{a, \GR}). \]
	Since $\omega_{a, \GR}$ is a Cartier crystal and $A$ is ordinary, we deduce that $\chi_{ss}(A, \omega_{a, \GR}) \geq 0$ by \autoref{Euler_char_Cartier_modules_ordinary}, so the first part is proven.
	
	Let us now assume that $X$ is not of general type. Note that combining \autoref{rem_we_may_assume_sub_Cartier_mod} and \autoref{Euler_char_Cartier_modules_ordinary}, it is enough to show that $H^0(A, a_*\omega_X \otimes L) = 0$ for some $L \in \Pic^0(X)$. This is exactly \autoref{not_of_gen_type_implies_locus_not_full}.
\end{proof}

We find the following application rather interesting:

\begin{sprop}\label{suprising_GR_vanishing}
	Let $X$ be a smooth proper threefold, and let $a \colon X \to A$ be a generically finite morphism to an ordinary abelian variety $A$. If $K_X$ is not big, then Grauert--Riemenschneider holds up to nilpotence for $a$, i.e. \[ R^ia_*\omega_X \sim_C 0 \] for all $i > 0$.
\end{sprop}
\begin{proof}
	Since $\dim(X) = 3$, we know by \cite[Proposition 2.6.1]{Hacon_Pat_GV_Characterization_Ordinary_AV} (see the arXiv version) that $R^ia_*\omega_X = 0$ for all $i > 1$ and that $R^1a_*\omega_X$ is at most supported at finitely many points. Note also that the same argument as in the proof of \autoref{main_thm_ss} shows that $\chi_{ss}(A, a_*\omega_X) = 0$, so we obtain that
	\[  0 \expl{=}{\autoref{main_thm_ss}} \chi_{ss}(X, \omega_X) = \chi_{ss}(A, a_*\omega_X) - \chi_{ss}(A, R^1a_*\omega_X) = - \chi_{ss}(A, R^1a_*\omega_X).  \]
	Since $R^1a_*\omega_X$ is supported at closed points, we have have $\chi_{ss}(A, R^1a_*\omega_X) = h^0_{ss}(A, R^1a_*\omega_X)$ so we deduce that \[ h^0_{ss}(A, R^1a_*\omega_X) = 0. \] Again, since $R^1a_*\omega_X$ is supported at points, it follows that $R^1a_*\omega_X \sim_C 0$.
\end{proof}

It would be interesting to study whether in this case, we actually have that $R^1a_*\omega_X = 0$ and that $a_*\omega_X$ is a GV--sheaf.

\begin{example_sec}\label{example_non_GR_van}
	If $K_X$ is big or if $\dim(X) \geq 4$, the statement of \autoref{suprising_GR_vanishing} is false in general. Indeed, by \cite[Corollary 5.4]{Baudin_Bernasconi_Kawakami_Frobenius_GR_fails}, there exists a smooth projective threefold $Y$ with a generically finite map to an abelian variety $A$, such that Grauert--Riemenschneider vanishing fails (even up to nilpotence). Note that by construction (see the proof of \cite[Proposition 3.13]{Hacon_Kovacs_GV_fails_in_pos_char}), we may choose $A$ to be an ordinary abelian variety. The variety $Y$ must then necessarily be of general type by \autoref{suprising_GR_vanishing}.
	
	To obtain higher--dimensional counterexamples, simply take $Y \times B$ for $B$ any ordinary abelian variety.

\end{example_sec}

\begin{sdefn}
	A proper variety $X$ is said to be \emph{weakly ordinary} if for all $i \geq 0$, the natural map $H^i(X, \cO_X) \to H^i(X, F_*\cO_X)$ is an isomorphism.
\end{sdefn} 

\begin{scor}\label{main_thm_ordinary}
	Let $X$ be a smooth proper weakly ordinary variety of maximal Albanese dimension. Then \[ \chi(X, \omega_X) \geq 0. \] If $X$ is in addition not of general type, then \[ \chi(X, \omega_X) = 0. \]
\end{scor}
\begin{proof}
	By definition of weak ordinarity and Serre duality, we know that $\chi_{ss}(X, \omega_X) = \chi(X, \omega_X)$. By \autoref{main_thm_ss}, it is then enough to show that $\Alb(X)$ is ordinary. Given that the pullback map $H^1(\Alb(X), \cO_{\Alb(X)}) \to H^1(X, \cO_X)$ is injective (\autoref{existence_albanese}) and that the Frobenius action on $H^1(X, \cO_X)$ is bijective, there cannot be any nilpotent element in $H^1(\Alb(X), \cO_{\Alb(X)})$.
	By definition, $\Alb(X)$ is then ordinary.
\end{proof}

Given a Cartier module $\cM$, we denote by $V^0_{\injj}(\cM)$ the support of $\cH^0\FM_A(\cM)_{\injj}$ (see \autoref{canonical_injective_V_module} for the $(\cdot)_{\injj}$ notation).

\begin{sthm}\label{euler_char_zero_implies_alb_fibered_by_ab_vars}
	Let $X$ be a smooth proper weakly ordinary variety of maximal Albanese dimension, and assume that $\chi(X, \omega_X) = 0$. Then $\alb_X(X)$ is fibered by ordinary abelian varieties.
\end{sthm}

In the proof of \autoref{euler_char_zero_implies_alb_fibered_by_ab_vars}, we will need the following result:

\begin{sprop}\label{fibering_alb_image}
	Let $Y$ be a normal variety of maximal Albanese dimension with Albanese morphism $a \colon Y \to A$, and assume that $V^0_{\injj}(a_*\omega) \neq \bighat{A}$ for some non--zero Cartier module $\omega \inc \omega_Y$. Then $a(Y)$ is fibered by abelian varieties.
\end{sprop} 
\begin{proof}
	This is \cite[Proposition 4.2.8]{Baudin_Effective_Characterization_of_ordinary_abelian_varieties}.
\end{proof}

\begin{proof}[Proof of \autoref{euler_char_zero_implies_alb_fibered_by_ab_vars}]
	 To lighten notations, let $a \colon X \to A$ denote the Albanese morphism of $X$. Throughout this proof, all points will be closed points, and we will use \autoref{rem_we_may_assume_sub_Cartier_mod} without further mention. Consider the Stein factorization $X \xrightarrow{\pi} Y \xrightarrow{b} A$ of $a$. Since $\pi_*\omega_{\pi, \GR} \inc \pi_*\omega_X \inc \omega_Y$, it is enough to show that $V^0_{\injj}(b_*\omega_{\pi, \GR}) \neq \bighat{A}$ by \autoref{fibering_alb_image}. By \cite[Lemma 3.2.9]{Baudin_Effective_Characterization_of_ordinary_abelian_varieties}, we may assume that $k$ is uncountable. We then obtain by \cite[Theorem 3.3.5.(e) and Proposition 3.3.17.(c)]{Baudin_Positive_characteristic_generic_vanishing_theory} that showing that $V^0_{\injj}(b_*\omega_{\pi, \GR}) \neq \bighat{A}$ is equivalent to showing that for very general $\alpha \in \bighat{A}$, \[ H^0(A, b_*\omega_{\pi, \GR}^{\perf} \otimes \cP_{\alpha}) = 0. \] Note that \[ b_*\omega_{\pi, \GR}^{\perf} \cong b_*\left((\pi_*W\omega_X)/p\right) \expl{\cong}{$b$ is finite} (a_*W\omega_X)/p \cong \omega_{a, \GR}^{\perf},  \] so we have to show that  \[ H^0(A, \omega_{a, \GR}^{\perf} \otimes \cP_{\alpha}) = 0 \] for very general $\alpha \in \bighat{A}$. Consider the diagram 
	 \[ \begin{tikzcd}
		& X \times \bighat{A} \arrow[ld, "\pi"'] \arrow[rd, "q"] &            \\
		X &                                           & \bighat{A},
	\end{tikzcd} \] where both maps denote the projections, and let $\cL \coloneqq (a, \id_{\bighat{A}})^*\cP \in \Pic(X \times \bighat{A})$. Note that by \stacksproj{08IB}, for all $\alpha \in \bighat{A}$ and $\cM \in \Coh(X)$, we have \[ Rq_*(\pi^*\cM \otimes \cL) \otimes^L k(\alpha) \cong \RGamma(X, \cM \otimes \cL_{\alpha}). \] In particular, given that the map $Rq_*(\pi^*F_*\omega_X \otimes \cL) \to Rq_*(\pi^*\omega_X \otimes \cL)$ induced by the Cartier operator $F_*\omega_X \to \omega_X$ is an isomorphism when applying $ - \otimes^L k(0)$ ($X$ is weakly ordinary), it must be an isomorphism over some open $0 \in U \subseteq \bighat{A}$. Let $V \coloneqq \bigcap_{e > 0} p^e(U) \subseteq \bighat{A}$ (which may not be open anymore). Then all the maps \[ H^i(X, \omega_X^{\perf} \otimes \cL_{\alpha}) \to H^i(X, \omega_X \otimes \cL_{\alpha}) \] are isomorphisms whenever $\alpha \in V$ by definition and the projection formula. Hence, in this case, we obtain that \[ 0 = \chi(X, \omega_X) = \chi(X, \omega_X \otimes \cL_{\alpha}) = \chi(X, \omega_X^{\perf} \otimes \cL_{\alpha}) \expl{=}{\autoref{weak_GR_sheaf_on_steroids} and \autoref{cohom_perfection_twist_fg}} \chi(A, \omega_{a, \GR}^{\perf} \otimes \cP_{\alpha}). \]
	However, we know that for $\alpha \in \bighat{A}$ very general, $H^i(A, \omega_{a, \GR}^{\perf} \otimes \cP_{\alpha}) = 0$ for all $i > 0$ by \cite[Theorem 3.3.5.(e)]{Baudin_Positive_characteristic_generic_vanishing_theory} (or \cite[Theorem 1.1]{Hacon_Pat_GV_Geom_Theta_Divs}), so $H^0(A, \omega_{a, \GR}^{\perf} \otimes \cP_{\alpha}) = 0$ too if in addition $\alpha \in V$. In other words, we have proven that $H^0(A, \omega_{a, \GR}^{\perf} \otimes \cP_{\alpha}) = 0$ for $\alpha$ very general. \qedhere
%
%
%
%
%
\end{proof}

\bibliographystyle{alpha}
\bibliography{bibliography}

\Addresses

\end{document}